\newcommand{\R}{\mathbb{R}}
\newcommand{\rn}{\mathbb{R}^N}
\newcommand{\aaa}{\textsf{a}}
\newcommand{\bd}{\partial}
\newcommand{\ep}{\varepsilon}
\newcommand{\la}{\lambda}
\newcommand{\Om}{\Omega}
\newcommand{\p}{\partial}
\newcommand{\xx}{\bar{x}}
\newtheorem{theorem}{Theorem}[section]
\newtheorem{proposition}[theorem]{Proposition}
\newtheorem{corollary}[theorem]{Corollary}
\newtheorem{lemma}[theorem]{Lemma}
\newtheorem{remark}[theorem]{Remark}
\newtheorem{claim}[theorem]{Lemma}
\begin{document}

\author[C. Bianchini]{Chiara Bianchini}
\author[A. Henrot]{Antoine Henrot}
\author[P. Salani]{Paolo Salani}

\address{C. Bianchini, Dip.to di Matematica ``U. Dini'', Universit\`a degli Studi di Firenze, Viale Morgagni 67/A, 50134 Firenze - Italy}
\email{cbianchini@math.unifi.it}

\address{A. Henrot: Institut Elie Cartan, UMR CNRS 7502, Universit\'e de Lorraine, 
Boulevard des Aiguillettes B.P. 70239, F-54506 Vandoeuvre-les-Nancy Cedex, France}
\email{antoine.henrot@univ-lorraine.fr}

\address{P. Salani, Dip.to di Matematica ``U. Dini'', Universit\`a degli Studi di Firenze, Viale Morgagni 67/A, 50134 Firenze - Italy}
\email{salani@math.unifi.it}
\date{}

\keywords{Torsion problem, overdetermined problem, shape optimization}
\subjclass[2010]{{primary: 35N25,   secondary: 49Q10, 35B06, 49J20, 49N60}}

\begin{abstract}
We investigate an overdetermined Torsion problem, with a non-constant positively homogeneous boundary constraint on the gradient.
We interpret this problem as the Euler equation of a shape optimization problem, we prove existence and regularity of a solution. 
Moreover several geometric properties of the solution are shown.
\end{abstract}

\title[An overdetermined problem]{An overdetermined problem with non constant boundary condition}

\maketitle

\section{Introduction}
An important class of shape optimization problems occurs when a free boundary problem is considered.
Of particular interest is the case of {overdetermined} boundary value problems, which in general corresponds to a classical partial differential equation where both Dirichlet and Neumann boundary conditions are imposed on the boundary of the domain. 
Obviously this over-determination makes the domain itself unknown. 
Interesting questions are then the proof of the existence of a solution, possibly uniqueness and the study of qualitative properties of a solution.
A very large amount of literature exists for such problems, depending on the governing operator 
and on the overdetermined conditions which in many cases writes as $u=0$ and $|\nabla u|=$constant on $\p\Om$, although several
other kinds of overdetermined conditions and operators have been considered in the literature (see for instance \cite{AM}, \cite{CS}, \cite{CMS1}, \cite{Greco2}, \cite{HPh}, \cite{Salani}, \cite{Sc}, \cite{Sh} and references therein).

In particular  here we will deal with a governing operator of the torsion type, that is with the most classical equation:
\begin{equation}\label{torsioneq}
-\Delta u =1 \qquad\text{ in }\Om
\end{equation}
and of course everybody knows the famous result by Serrin \cite{Serrin} who proved that if a solution to (\ref{torsioneq}) exists with $u=0$ and $|\nabla u|=$ constant on the boundary of $\Om$, then the set $\Om$ must be a ball and the function $u$ is radial.
After Serrin, many authors investigated such a problem and stability results hold.
More precisely it has been proved in \cite{ABR} and in \cite{BNST} that if $|\nabla u|$ is almost constant on the bounday of $\Om$, then $\Om$ is not far from being a ball.
A natural question is then to investigate what happens in the case of a ``genuine'' non-constant boundary condition for the gradient.
In particular here we are interested in an overdetermined condition of the type 
$$|\nabla u(x)|=g(x)\quad \text{on }\partial\Omega\,.
$$

Let us quote that the same conditions have been already considered  for differential problems of the torsion and of the Bernoulli types  in \cite{A1, A2, AlCa, GuSh, HeSh}.
On the other hand in \cite{Bia}  the Bernoulli interior problem equipped with overdetermined condition written as $|\nabla u(x)|=g(\nu(x))$ is considered, where $\nu$ is the outer unit normal of the free boundary.
At our knowledge an analogous overdetermined condition (which would be very natural) for the torsion problem has not been considered yet.

Let us now describe in detail the problem we are interested in; we will relate it below to the existing literature.

For any bounded open set $\Om$ (or set of finite measure)
we denote by $u_\Om$ the \emph{stress function} of $\Om$; that is the solution of the \emph{torsion problem}: 
\begin{equation}\label{i1}
\begin{cases}
-\Delta u_\Om=1 & \qquad \text{in }\;\Om\\
u_\Om=0 & \qquad \text{on }\;\p\Om,
\end{cases}
\end{equation}
or of its weak form
\begin{equation}\label{i2}
u_{\Omega}\in H^1_0(\Omega),\;\;\forall\, v\in H^1_0(\Omega): \ \int_\Om\nabla u_{\Omega}\nabla v=\int_\Om u_{\Omega}\,v,
\end{equation}
where $H^1(\Omega)$ is the Sobolev space of functions in $L^2(\Omega)$ whose first derivatives are in $L^2(\Omega)$ and
$H^1_0(\Omega)$ is the closure in $H^1(\Omega)$ of smooth functions compactly supported in $\Omega$.
Notice that the stress function $u_\Om$ can be characterized as
\begin{eqnarray}\label{i3}
 G_\Om(u_\Om)&=&\min\{G_\Om(v),\ v\in H^ 1_0(\Om)\}\;\text{ where}\\
  G_\Om(v)&=&\frac{1}{2}\int_\Om |\nabla v|^2\,dx - \int_\Om v\,dx\,.\nonumber
\end{eqnarray}
Let $g$ be a function defined on $\rn$ and satisfying
\begin{equation}\label{assum_g}
\begin{cases}
\text{$g:\rn \to \R$ positively homogeneous of degree $\alpha$}\\ 
\qquad\qquad\text{(i.e. $g(tx)=t^\alpha g(x)$ $\forall t>0,\forall x\in\rn$),}\\
\text{$g$ H\"older continuous, $g>0$ outside $0$.}
\end{cases}
\end{equation}
We are interested in solving the following overdetermined free
boundary problem of the torsional type with a non constant boundary condition:
\begin{equation}\label{ovfbp1}
\begin{cases}
-\Delta u_\Om=1 & {\rm in }\;\Om\\
u_\Om=0 & {\rm on }\;\p\Om\\
|\nabla u_\Om|=g(x) & {\rm on }\;\p\Om.
\end{cases}
\end{equation}
In this context, this problem is close to the one considered by B. Gustafsson
and H. Shahgholian in \cite{GuSh}. 
In fact they study the partial differential equation $-\Delta u=f$
where $f$ is a function (or a measure) {\it whose positive part $(f)^+$ has compact support} (here and later we will denote 
by $(f)^+$ the positive part of the function $f$, that is $\max\{0,f\}$). 
This
makes a real difference with (\ref{ovfbp1}) as will be clear in a while.
Indeed they use the Alt-Caffarelli approach, consisting in minimizing
\begin{equation}\label{acf}
 G_\Omega(u_\Om)+\frac{1}{2}\,\phi(\Om),
\end{equation}
where 
\begin{equation}\label{defphi}
\phi(\Om):=\int_\Om g^2(x)\,dx
\end{equation}
(they write it as a problem in the calculus of variations replacing $\Om$
by $\{u>0\}$ but it does not change anything). 
Unfortunately, in our case, the fact that the support of $(f)^+\equiv 1$ is the whole $\rn$ makes the minimization problem
(\ref{acf}) in general ill-posed, since the infimum can be $-\infty$ as it is easily
seen by explicit computations in the radial case. 
This is the reason for we have chosen a different method. 

We use a variational approach which consists in looking at (\ref{ovfbp1}) as the optimality conditions of some shape 
optimization problem. 
More precisely, let $J$ be the functional defined as the opposite of the torsional rigidity:
\begin{equation}\label{s1}
J(\Om)=\frac{1}{2}\int_\Om |\nabla u_\Om|^2\,dx - \int_\Om u_\Om\,dx=
-\frac{1}{2} \int_\Om u_\Om\,dx=-\frac{1}{2} \int_\Om |\nabla u_\Om|^2\,dx.
\end{equation}
and consider $\phi$ as defined in (\ref{defphi}).
The shape optimization problem we are interested in, consists in minimizing $J$ with the constraint 
\begin{equation}\label{s2}
\phi(\Om):=\int_\Om g^2(x)\,dx \leq 1.
\end{equation}
Let us point out that this introduces a further difficulty since we have to deal with a Lagrange multiplier. 
The choice of a homogeneous function $g$, allows us to encounter this difficulty  (see the proof of Corollary 
\ref{maincor}) since it permits to estimate the value of the Lagrange multiplier.
However we point out that the existence of a solution to the shape optimization problem is guaranteed under the simple assumption $g(x)>0$ outside the origin and $\lim g(x)=+\infty$ for $|x|\to +\infty$.

We remark that  this shape optimization problem is a variant of the famous Saint-Venant problem, and hence it has its own practical interest. 
In the classical Saint-Venant problem one looks for the shape of the set with given area which has maximal torsional 
rigidity; in \cite{Pol} G. Poly\`a proved that the answer is the ball. 
Here we solve the same problem in the class of non-uniformly dense sets, whose density depends on the function $g$. 

\medskip
The paper is organized as follows. In Section \ref{secSop} we prove the main results of the paper:
existence and regularity of a minimizer for our shape optimization problem and, as a consequence,
the existence of a solution to the free boundary problem. In Section \ref{basicpropSec} we prove
some basic properties: the origin is in general inside the solution, monotonicity with respect to $g$ and uniqueness of the solution when $\alpha>1$. 
In Section \ref{starshapeSec} we investigate starshape and we prove the starshapedness of solutions for $\alpha\neq 1$. Section \ref{convexitySec}
is devoted to prove the
 convexity of the solution, under suitable assumptions. 
In Section \ref{symmetrysec} we prove some symmetry results and we study the stability of the radiality when
$g$ is close to be radially symmetric.
Finally in Section \ref{finalSec} we investigate the relationship between the sahpe of the solution and the shape of the level sets of $g$, giving some a priori bounds for the solution $\Omega$ in terms on $G_1=\{x\in\R^n\,:\, g(x)\leq 1\}$.

\section{The shape optimization problem}\label{secSop}

We consider the energy functional $J(\cdot)$ defined in (\ref{s1}). 
We recall that, by the maximum principle, $J$ is decreasing with respect
to set inclusion that is: $\Om_1\subset \Om_2$ implies $J(\Om_1)\geq J(\Om_2)$.

In this section, we want to minimize the functional $J(\Om)$ among open sets
satisfying (\ref{s2}).
Let us remark that the measure of sets $\Om$ satisfying (\ref{s2}) is bounded
since $g(x)$ diverges at infinity for homogeneity.

As already emphasized, the homogeneity of $g$ plays  a crucial role. 
In particular it makes the problem having a nice behaviour with respect to homotheties.
More precisely, for every $t>0$, $\Om\subset\rn$, it holds 
\begin{eqnarray*}
J(t\Omega)   &=& -\frac{1}{2}\,\int_{t\Omega} t^2 u_\Omega(x/t)\,dx=t^{2+N}J(\Omega),\\ \label{hom2}
\phi(t\Omega)&=& \int_{t\Omega} g^2(x)\,dx=t^{2\alpha+N}\phi(\Omega), \label{hom1}
\end{eqnarray*}
where the first equality follows from the fact that the stress function of $t\Omega$ is 
$$
u_{t\Om}(x) = t^2 u_\Omega(\frac xt).
$$

Therefore the two following problems are equivalent from a qualitative point of view:
\begin{eqnarray}
\min_{\Om\subset\rn}\{J(\Omega) :  \phi(\Omega)\leq 1\} \label{min1},\\
\min_{\Om\subset\rn} \phi(\Omega)^{-\frac{2+N}{2\alpha+N}}J(\Omega).\label{min2}
\end{eqnarray}

Let us state our main result on existence and regularity of a solution to Problem (\ref{min1}) or (\ref{min2}), whose proof is given in Section \ref{secmainthm}.
In particular we stress that the existence part in Theorem \ref{mainthm} does not need the homogeneity property of $g$ but it holds under the weaker assumption $g>0$ outside the origin and $g$ tends to infinity for $|x|\to\infty$.
\begin{theorem}\label{mainthm}
Problem (\ref{min1}) admits a solution $\Om$. 
This one is $C^{1,\beta}$ in dimension $N=2$. 
In dimension $N\geq 3$, the reduced boundary $\p_{red} \Om$ is $C^{1,\beta}$ and $\p\Om\setminus \p_{red} \Om$ has 
zero $(N-1)$-Hausdorff measure.
\end{theorem}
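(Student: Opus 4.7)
The plan is to view (\ref{min1}) as a free boundary problem of Alt--Caffarelli type and to apply the corresponding existence and regularity theory. Using (\ref{i3}) and (\ref{s1}), the problem rewrites as the minimization of
\[
F(v):=\tfrac{1}{2}\int_\rn|\nabla v|^2\,dx-\int_\rn v\,dx
\]
over $v\in H^1(\rn)$, $v\ge 0$, subject to $\int_\rn g^2\chi_{\{v>0\}}\,dx\le 1$, with the candidate optimal set given by $\Om:=\{u>0\}$ for a minimizer $u$. The restriction to $v\ge 0$ is harmless, since $F(v^+)\le F(v)$ and $\{v^+>0\}=\{v>0\}$.

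For existence, let $v_n\ge 0$ be a minimizing sequence; by (\ref{i3}) we may further assume $v_n=u_{\Om_n}$ with $\Om_n=\{v_n>0\}$. The constraint combined with $g(x)\to+\infty$ at infinity forces $|\Om_n|$ to be uniformly bounded and prevents the sets $\Om_n$ from escaping to infinity (otherwise $\int_{\Om_n}g^2$ would diverge), so $\Om_n\subset B_R$ for some $R$ independent of $n$. The uniform $L^\infty$ bound on $u_{\Om_n}$ coming from comparison with the torsion function on $B_R$, together with the bound on $F(v_n)$, gives $\|v_n\|_{H^1(B_R)}\le C$. Up to a subsequence, $v_n\rightharpoonup u$ weakly in $H^1$ and a.e.; $F$ is weakly l.s.c., and the pointwise inequality $\chi_{\{u>0\}}\le\liminf\chi_{\{v_n>0\}}$ combined with Fatou's lemma preserves the constraint in the limit. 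Note that this step uses only $g>0$ outside the origin and $g(x)\to+\infty$ at infinity, as announced.

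For regularity, the constraint must be saturated at $u$ (otherwise $u$ would solve $-\Delta u=1$ globally on $\rn$, which is impossible), and by the Lagrange multiplier principle there exists $\Lambda>0$ such that $u$ is a local minimizer of
\[
\tilde F(v):=\tfrac{1}{2}\int|\nabla v|^2\,dx-\int v\,dx+\tfrac{\Lambda}{2}\int g^2\chi_{\{v>0\}}\,dx
\]
in $H^1(\rn)$, $v\ge 0$. This is precisely an Alt--Caffarelli free-boundary functional with H\"older-continuous weight $Q^2:=\Lambda g^2$ and $L^\infty$ forcing. Since $0\in\Om$ (see Section \ref{basicpropSec}), $g$ is bounded below in a neighborhood of $\p\Om$, so $Q$ is uniformly positive near the free boundary. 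The Alt--Caffarelli theory \cite{AlCa}, extended to accommodate the forcing $-\int v$ and a H\"older weight, then yields local Lipschitz regularity and non-degeneracy of $u$, finite perimeter of $\Om$, $C^{1,\beta}$ regularity of the reduced boundary $\p_{\mathrm{red}}\Om$, and $\mathcal{H}^{N-1}(\p\Om\setminus\p_{\mathrm{red}}\Om)=0$ when $N\ge 3$. In dimension $N=2$ the original argument shows that the singular set is empty, giving $C^{1,\beta}$ regularity of all of $\p\Om$.

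The main obstacle is the regularity step, in which two deviations from the canonical Alt--Caffarelli setting---the linear forcing $-\int v$ and the variable weight $g^2$---must be threaded through the internal variation, non-degeneracy and blow-up analysis. Both modifications are by now standard (the lower-order term is absorbed by comparison with the harmonic replacement; the H\"older weight is handled by freezing coefficients at the free boundary), but the careful verification of each step is the substantive technical part of the proof.
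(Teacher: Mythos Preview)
Your outline follows the same overall strategy as the paper, but two of the steps you treat as routine are in fact the substantive parts of the argument, and as written they are gaps.

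\textbf{Boundedness.} From $\int_{\Om_n}g^2\le 1$ and $g\to\infty$ you correctly get a uniform bound on $|\Om_n|$ and you can exclude a piece of \emph{fixed positive measure} drifting to infinity. This does \emph{not} give $\Om_n\subset B_R$ for a uniform $R$: nothing prevents a minimizing sequence from carrying a thin tail of measure $\varepsilon_n\to 0$ out to distance $R_n\to\infty$ while keeping $\varepsilon_n R_n^{2\alpha}$ small. The paper deals with this in two stages: a concentration--compactness argument (\`a la \cite{buc}) to extract a possibly unbounded quasi-open minimizer, and then a separate proof that the minimizer is a \emph{local shape subsolution} in the sense of \cite{buc2}, which forces boundedness (and finite perimeter). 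Your sentence ``so $\Om_n\subset B_R$'' hides exactly this work.

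\textbf{The Lagrange multiplier / penalization step.} You write that ``by the Lagrange multiplier principle there exists $\Lambda>0$ such that $u$ is a local minimizer of $\tilde F$''. No standard multiplier rule applies here: the constraint functional $v\mapsto\int g^2\chi_{\{v>0\}}$ is neither convex nor differentiable, not even continuous on $H^1$. What is actually needed---and what the paper proves as a lemma---is that for some \emph{finite} $\lambda>0$ the constrained minimizer $u$ also minimizes the penalized functional
\[
G_\lambda(v)=G(v)+\lambda\Bigl(\int_{\{v>0\}}g^2-1\Bigr)^+
\]
among all competitors. This is shown by assuming the penalized minimizer $u_\lambda$ violates the constraint, differentiating $t\mapsto G_\lambda\bigl((u_\lambda-t)^+\bigr)$ at $t=0$ via coarea, and reaching a contradiction for $\lambda$ large through a weighted isoperimetric inequality $\int_{\partial\omega}g\,d\mathcal{H}^{N-1}\ge C$ for sets with $\int_\omega g^2\ge 1$. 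Only once this equivalence is in hand can one feed $u$ into the Alt--Caffarelli machinery with weight $Q^2=\lambda g^2$. Invoking a ``Lagrange multiplier principle'' in one line skips the genuine difficulty.

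A smaller point: your appeal to Section~\ref{basicpropSec} for $0\in\Om$ is only available when $\alpha>1$, whereas the theorem is stated for general $\alpha>0$; the paper establishes the regularity away from the origin first and treats the origin separately.
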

The existence of a solution to the overdetermined Free Boundary Problem (\ref{ovfbp1}) follows:
\begin{corollary}\label{maincor}
Let $g$ satisfy (\ref{assum_g}) for some $\alpha>0$, $\alpha\not= 1$. 
Then there exists a solution to the overdetermined Free Boundary Problem (\ref{ovfbp1}).
\end{corollary}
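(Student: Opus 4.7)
The plan is to take the minimizer $\Omega^*$ supplied by Theorem~\ref{mainthm}, write the first-order optimality condition as an Euler--Lagrange boundary identity with a Lagrange multiplier $\mu$, evaluate $\mu$ using the homogeneity of $g$, and finally rescale $\Omega^*$ by a homothety to absorb $\mu$ into the overdetermined boundary condition; this last step is what forces the hypothesis $\alpha\neq 1$. First I would observe that the constraint is active at any minimizer, $\phi(\Omega^*)=1$: otherwise, for $t$ slightly larger than $1$, the scaling laws recalled at the start of this section would yield an admissible $t\Omega^*$ with $J(t\Omega^*)=t^{N+2}J(\Omega^*)<J(\Omega^*)$ (since $J<0$), contradicting minimality. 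By the regularity statement in Theorem~\ref{mainthm}, Hadamard's shape derivative formulas
\[
J'(\Omega^*)[V]=-\tfrac{1}{2}\int_{\partial_{\mathrm{red}}\Omega^*}|\nabla u_{\Omega^*}|^2\,V\cdot\nu\,d\mathcal{H}^{N-1},\qquad \phi'(\Omega^*)[V]=\int_{\partial_{\mathrm{red}}\Omega^*}g^2\,V\cdot\nu\,d\mathcal{H}^{N-1},
\]
apply to any smooth $V$ compactly supported away from the singular set. First-order optimality then furnishes a Lagrange multiplier $\mu\ge 0$ with $|\nabla u_{\Omega^*}|^2=2\mu g^2$ on $\partial_{\mathrm{red}}\Omega^*$.

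Next, I would pin down $\mu$ by restricting optimality to the one-parameter family of dilations $t\mapsto t\Omega^*$ and differentiating the Lagrangian $J(t\Omega^*)+\mu(\phi(t\Omega^*)-1)$ at $t=1$. Using the scaling laws $J(t\Omega^*)=t^{N+2}J(\Omega^*)$ and $\phi(t\Omega^*)=t^{2\alpha+N}\phi(\Omega^*)$ together with $\phi(\Omega^*)=1$, the condition becomes $(N+2)J(\Omega^*)+\mu(2\alpha+N)=0$, whence
\[
\mu=-\frac{(N+2)J(\Omega^*)}{2\alpha+N}>0,
\]
the positivity following from $J(\Omega^*)<0$. (The same value can be recovered directly by applying a Pohozaev identity to $-\Delta u_{\Omega^*}=1$ and using Euler's relation $x\cdot\nabla g^2=2\alpha g^2$ to evaluate the two boundary integrals $\int_{\partial\Omega^*}|\nabla u_{\Omega^*}|^2(x\cdot\nu)$ and $\int_{\partial\Omega^*}g^2(x\cdot\nu)$.)

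For the final step, set $\tilde\Omega:=t\Omega^*$ for a positive constant $t$ to be chosen. From $u_{t\Omega^*}(x)=t^2u_{\Omega^*}(x/t)$ and $g(ty)=t^\alpha g(y)$, one finds on $\partial\tilde\Omega$ that $|\nabla u_{\tilde\Omega}|=t^{1-\alpha}\sqrt{2\mu}\,g$. Choosing $t=(2\mu)^{1/(2(\alpha-1))}$---a well-defined positive real \emph{precisely because} $\alpha\neq 1$---makes the coefficient equal to $1$, and $\tilde\Omega$ satisfies $|\nabla u_{\tilde\Omega}|=g$ on its boundary, while the torsion equation and Dirichlet condition are preserved by any homothety. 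Hence $\tilde\Omega$ solves (\ref{ovfbp1}). The main technical subtlety lies in the shape-derivative step: in dimension $N\geq 3$ one only has $C^{1,\beta}$ regularity on the reduced boundary, so admissible perturbations must be supported in a tubular neighborhood of $\partial_{\mathrm{red}}\Omega^*$; the zero $(N-1)$-Hausdorff measure of the singular set ensures this restriction does not affect the boundary integrals, and the pointwise identity $|\nabla u_{\Omega^*}|^2=2\mu g^2$ on the reduced boundary is sufficient for the final rescaling to produce a genuine solution of the overdetermined problem.
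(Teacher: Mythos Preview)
Your proof is correct and follows essentially the same route as the paper: derive the boundary optimality condition $|\nabla u_{\Omega^*}|^2 = c\, g^2$ via shape differentiation and a Lagrange multiplier, then absorb the constant by a homothety, which is precisely where $\alpha\neq 1$ enters. The only notable difference is that you pin down $\mu$ explicitly by testing against the dilation family (thereby getting $\mu>0$ from $J(\Omega^*)<0$), whereas the paper simply asserts $\mu\neq 0$ via Hopf's lemma/unique continuation and never identifies its value.
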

\begin{remark}
The overdetermined boundary condition $|\nabla u_\Omega(x)|=g(x)$ holds on the regular
part of the boundary of $\Omega$. 
\end{remark}
\begin{proof}[Proof of Corollary \ref{maincor}] Let $\Om$ be a solution of the shape optimization problem (\ref{min1}) given by
Theorem \ref{mainthm}. 
Since the reduced boundary of $\Om$ is $C^{1,\beta}$,
by classical regularity results, the gradient of $u_\Om$ is defined almost
everywhere on the boundary. 
We can then write the optimality conditions.
For that purpose, we use the classical shape derivative as defined, for
example in \cite[chapter 5]{HP}. The derivative of the functional $J$
at $\Om$ in the direction of some deformation field $V$ is given
by
\begin{equation}\label{der1}
    dJ(\Om;V)=-\int_\Om \nabla u_\Om.\nabla u' -\frac{1}{2}\,\int_{\p\Om}
    |\nabla u_\Om|^2 \langle V,n\rangle,
\end{equation}
where $u'$ is the derivative with respect to the domain of $u_\Om$, solution of
\begin{equation}\label{der2}
\begin{cases}
-\Delta u'=0 & {\rm in }\;\Om\\
u'=-\frac{\p u_\Om}{\p n}\,\langle V,n\rangle & {\rm on }\;\p\Om.
\end{cases}
\end{equation}
By Green formula and (\ref{der2}), $\int_\Om \nabla u_\Om.\nabla u'=0$.
On the other hand, the derivative of the constraint $\phi$ is
\begin{equation}\label{der3}
    d\phi(\Om;V)=\int_{\p\Om} g^2 \langle V,n\rangle.
\end{equation}
By the optimality condition there exists a Lagrange multiplier
$\mu$ such that, for any deformation field $V$, we have 
$$
dJ(\Om;V)=\mu\ d\phi(\Om;V),
$$ 
and, according to (\ref{der1}), (\ref{der3}) this writes as
\begin{equation}\label{der4}
     -\frac{1}{2}\,\int_{\p\Om} |\nabla u_\Om|^2 \langle V,n\rangle=\mu \int_{\p\Om} g^2 \langle V,n\rangle.
\end{equation}
Since equality (\ref{der4}) must hold for any $V$, we get
$$|\nabla u_\Om|^2 = -2\mu g^2\quad \mbox{on $\p\Om$}.$$
Let us remark that $\mu$ cannot be zero by unique continuation property
(or Hopf's lemma). 
Now, replacing $\Om$ by $t\Om$ where 
$$
t=(-2\mu)^{\frac 1{2(\alpha-1)}},
$$
and taking into account that 
$$|\nabla u_{t\Om}(x)|=t|\nabla u_{\Om}(x/t)|= t(-2\mu)^{1/2}g(x/t)=t^{1-\alpha}(-2\mu)^{1/2}g(x)=g(x),
$$ 
we get the desired result.
\end{proof}

\begin{remark}
The case $\alpha=1$ is a special one. As we can see explicitly in the radially
symmetric situation, it is possible to have no solution or an infinite
number of solutions. Indeed, let $g(x)=\aaa|x|$, as it is easily proved by
Schwarz symmetrization (see section \ref{symmetrysec}), the solution
has to be a ball. Now, looking for a ball $B_R$
of radius $R$ solving (\ref{ovfbp1}) is equivalent to solve $g(R)=R/N$ (because
$u_{B_R}=(R^2-|x|^2)/2N$) and the result follows according to the value of $\aaa$.
\end{remark}

\subsection{Proof of Theorem \ref{mainthm}}\label{secmainthm}

The proof splits into two parts which are separated in four paragraphs.
In the first part we prove the existence (and boundedness) of a solution, while in the latter the proof of regularity is presented.

More precisely in Paragraph \ref{proofex} we follow the lines of \cite{Crou}, cf also \cite{HP} and we use a concentration-compactness argument as in \cite{buc} to prove the existence of a minimizer which  is a \emph{quasi-open} set and which may be unbounded. 
We refer to \cite{HP} for a precise definition and discussion of the concept of quasi-open set; let us only remind that if $u\in H^1(\rn)$, then its super level sets $\{u>t\}$ are quasi-open, for each $t\in\R$.

In Paragraph \ref{proofbd}, using the notion of {\it local
shape subsolution} introduced in \cite{buc2}, we prove that the minimizer is
in fact bounded.
In Paragraphs \ref{proofreg1}, \ref{proofreg2} we prove the regularity of the minimizer as in \cite{Bri} (see also \cite{BHP}).
The main difficulty is to prove that it is actually an open set, then we can conclude to higher
regularity by classical techniques from free boundary problems like in \cite{AlCa} and \cite{GuSh}.

\subsubsection{Proof of existence} \label{proofex}

Let us introduce the following auxiliary problem
\begin{eqnarray} \label{aux}
\min\{G(v);\,v\in H^1(\R^N),\; \phi(\Om_v)\le 1\},
\end{eqnarray}
where $\Om_v$ denotes the quasi-open set $\Om_v:=\{x\in \R^N;v(x)> 0\}$, $\phi$ is defined in (\ref{defphi}) and $G$ 
is the functional defined by
\begin{equation}
G(v)=\frac{1}{2}\int_{\R^N} |\nabla v|^2\,dx - \int_{\R^N} v\,dx\,.
\end{equation}
Let us prove the existence of a minimizer $u$ for problem (\ref{aux}).

We use the classical Poincar\'e inequality, valid for any set
of bounded measure (see Lemma 4.5.3 in \cite{HP}).
As already noticed, the
constraint $\phi({\Om_v}) \leq 1$ implies that the measure of 
$\Om_v$ is uniformly bounded, that is $\exists m>0, |\Om_v|\leq m$. 
The Poincar\'e inequality writes as follows: there exists $C=C(N)>0$ such that for every $v\in H^1(\R^N)$ satisfying 
$|\Omega_v|\leq m$ it holds
\begin{equation}\label{poinc}
\int_{\R^N} v^2\leq C\,m^{\frac 2N}\int_{\R^N} |\nabla v|^2.
\end{equation}
Therefore, since $\int_{\Omega_v} v \leq m^{1/2}\left(\int_{\R^N} v^2\right)^{1/2}$, we have
$$
2G(v)\geq \int_{\R^N}|\nabla v|^2-C'\|v\|_{H^1(\R^N)},
$$
and $G(v)$ is estimated from below and a minimizing sequence $u_n$ is
necessarily bounded in $H^1(\R^N)$. 

Now we use a concentration compactness argument for the quasi-open sets $A_n=\{u_n>0\}$. 
Following \cite[Theorem 2.2]{buc} two situations may occur:
\begin{description}
 \item[Dichotomy] The sequence $\{A_n\}$ splits into two parts: $A_n=A_n^1\cup A_n^2$ with $d(A_n^1,A_n^2)\to
+\infty$ and $\liminf |A_n^i|>0$. The \emph{resolvent operators} satisfy $\|R_{A_n} - R_{A_n^1\cup A_n^2}\|\to 0$
in the operator norm (see \cite{buc} for details on the resolvent operator).
\item[Compactness] There exists a sequence of vectors $y_n\in \R^N$ and a positive Borel measure $\mu$
(vanishing on sets of zero capacity) such that $y_n+A_n$ $\gamma$-converges to the measure $\mu$
(and the resolvent operators satisfy $\|R_{y_n+A_n} - R_{\mu}\|\to 0$
in the operator norm).
 \end{description}

Notice that in the situation of Problem (\ref{aux}), dichotomy cannot occur because since $g^2\to +\infty$ at infinity, the constraint
$\int_{A_n} g^2 \leq 1$ prevents a subpart of $A_n$ of measure bounded from below to move to infinity.
Thus, we are in the compactness situation and we denote by $A_\mu$ the regular set of the limit measure $\mu$, defined
as
$$
A_\mu:=\{\bigcup A: A \mbox{ is finely open, } \mu(A)<\infty\}.
$$
Then the sequence $v_n(x)=u_n(x-y_n)=R_{y_n+A_n}(1)$
converges to $v=R_\mu(1)\in H^1_0(A_\mu)$  weakly in $H^1$ and almost everywhere
(and $A_\mu=\{v>0\}=\Om_v$).
Notice that at this point we can not say that $\Omega_v$ provides a solution to minimization problem \eqref{aux}, since the constraint is not translation invariant. 
We can avoid this problem arguing for istance as in \cite{BV}, proving that the sequence $\{y_n\}$ is bounded, thanks again to the behaviour of $g$ at infinity. 
Indeed first choose $R>0$ such that $\int_{B(O,R)}v^2\,dx=\alpha>0$ (here and in the sequel we denote by $B(x,R)$ the ball centered at $x$ with radius $R$); by the convergence of $v_n$ to $v$, we have
\begin{equation}\label{intun2}
\int_{B(0,R)}v_n^2dx=\int_{B(-y_n,R)}u_n^2dx=\int_{B(-y_n,R)\cap A_n}u_n^2dx\geq\frac{\alpha}{2}
\end{equation}
for $n$ large enough. Since $|A_n|$ is bounded, $\|u_n\|_\infty$ is bounded, say $\|u_n\|\leq M$; then \eqref{intun2} implies
$$
|A_n\cap B(-y_n,R)|\geq \frac{\alpha}{2M^2}\,,
$$
whence
$$
1\geq\int_{A_n}g^2dx\geq\int_{A_n\cap B(-y_n,R)}g^2dx\geq\frac{\alpha}{2M^2}\inf_{B(-y_n,R)}g^2\,.
$$
The latter leads to a contradiction if we assume that $\|y_n\|$ is unbounded, since we would have $\inf_{B(-y_n,R)}g^2\to\infty$.
So far, we have proved $y_n$ is bounded. 
Then it converges to some $y_0$ (up to a subsequence) and 
the sequence $u_n$
converges to $u(x)=v(x+y_0)$  weakly in $H^1$ and almost everywhere.
We set  $\Omega^*=\{u>0\}=A_\mu-y_0$ and
by Fatou
Lemma, we infer that 
$$
\int \chi_{\Om^*} g^2 dx \leq \liminf \int
\chi_{\Om_{u_n}} g^2 dx \leq 1
$$ 
and so the constraint is satisfied.
We deduce that $\Om^*$
provides a solution of the shape optimization problem
(\ref{s1}), (\ref{s2}), as it is classical in situations where the objective function
$J$ is monotone decreasing with respect to set inclusion.

\subsubsection{Proof of boundedness}\label{subbound} \label{proofbd}
We recall the definition of local shape subsolution introduced by D. Bucur in \cite{buc2}:
a set $A$ is a \emph{local shape subsolution} for the energy problem if there exist $\delta>0$ and $\Lambda>0$ such that for any quasi-open set $\tilde{A}\subset A$ with 
$\|u_{\tilde{A}}-u_A\|_{L^2}<\delta$ we have
\begin{equation}\label{shapesub}
J(A)+\Lambda |A|\leq J(\tilde{A})+\Lambda |\tilde{A}|.
\end{equation}
In \cite[Theorem 2.2]{buc2}, it is proved that any local shape subsolution is bounded (and has finite perimeter). 
Thus our aim is to prove that $\Om^*$ is a local shape subsolution. We argue by contradiction: let us assume that there exists
a sequence $\lambda_n$ going to 0 and a sequence $\Omega_n\subset \Omega^*$ such that
\begin{equation}\label{bd1}
J(\Omega_n)+\lambda_n|\Omega_n|< J(\Omega^*)+\lambda_n |\Omega^*|.
\end{equation} 
We can assume that $\Omega_n$ is an increasing sequence converging to $\Omega^*$ in the $L^2(\rn)$-norm of $u_{\Omega_n}$, that is $\|u_n-u\|_{L^2(\rn)}$ tends to zero, where $u=u_{\Omega^*}$. 
Indeed, if $\Omega_n$ converges to a strictly smaller set, then equation (\ref{bd1}) cannot hold by monotonicity of the energy $J$ and this would give a contradiction.
Fix $t_n>1$ such that 
$$
\phi(t_n \Omega_n)=t_n^{2\alpha +N}\int_{\Omega_n}g^2=1.
$$
Then necessarily $t_n\to 1$. By minimality of $\Omega^*$, 
$J(t_n\Omega_n)=t_n^{N+2}J(\Omega_n)\geq J(\Omega^*)$. \\
Plugging into (\ref{bd1}) yields
\begin{equation}\label{bd2}
 J(\Omega^*)\left\lbrack\frac{1-t_n^{N+2}}{t_n^{N+2}}\right\rbrack \leq \lambda_n(|\Omega^*|-|\Omega_n|).
\end{equation} 
We divide both sides of (\ref{bd2}) by $t_n^{N+2\alpha}-1=\int_{\Omega^*}g^2\left(\int_{\Omega_n}g^2\right)^{-1}-1$
and we get
\begin{equation}\label{bd3}
 \frac{t_n^{N+2}-1}{t_n^{N+2\alpha}-1}\left(-\frac{J(\Omega^*)}{t_n^{N+2}}\right) \leq 
\frac{\lambda_n|\Omega^*\setminus\Omega_n|\int_{\Omega_n}g^2}{\int_{\Omega^*\setminus\Omega_n}g^2}.
\end{equation} 
Now obviously, we just need to prove boundedness far from 0, thus we can assume that we are outside
a fixed ball $B(0,R)$ (this notion of local shape subsolution can be localized).
Therefore, 
$$
\dfrac{\int_{\Omega^*\setminus\Omega_n}g^2}{|\Omega^*\setminus\Omega_n|}\geq \min_{\R^N\setminus B(0,R)} g^2>0.
$$
We pass to the limit $\lambda_n\to 0$ and $t_n\to 1$; since the left-hand side of (\ref{bd3}) converges to $\frac{N+2}{N+2\alpha}\,(-J(\Omega^*)>0$ and the right-hand side tends to 0, we reach the desired contradiction.

\subsubsection{Proof of regularity} \label{proofreg2}

The proof of regularity of the optimal shape is
very similar to the proof given in \cite{Bri} (see also \cite{BHP})
where the constraint is $|\Omega_u|\leq 1$ instead of
$\int_{\Omega_u}g^2\leq 1$. 
These papers are themselves inspired by
\cite{AlCa} and \cite{GuSh}. Thus, in the
sequel, we will mainly emphasize the particularities of our
situation.

Before giving the details, we need to show some preliminary results.

Let us denote by $u$ the minimizer of problem (\ref{aux}). Since we know that
the minimizer is bounded, let us denote by $D$ a fixed ball containing $\Omega_u$.
The first
step is to prove that $u$ is continuous in $D$ (and therefore the
set $\Om_{u}=\{x\ :\ u(x)>0\}$ is open). To get rid of the constraint
(in order to be able to test with a wider class of functions), we first prove
that the minimization problem (\ref{aux}) is equivalent to a penalized problem.
\begin{lemma}\label{lemma1reg}
There exists $\lambda >0$ such that for any $v\in H^1_0(D)$
\begin{equation}\label{pen1}
G(u)\leq G(v)+\lambda\left(\int_{\Om_v} g^2(x)\,dx -1\right)^+\,.
\end{equation}
\end{lemma}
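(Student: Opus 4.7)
The plan is to argue by contradiction, exploiting the homogeneity of $g$ to rescale any competitor that violates the constraint $\phi(\Om_v)\le 1$ into an admissible one. Suppose no $\lambda$ works: for every $n\in\N$ there exists $v_n\in H^1_0(D)$ with
\[
G(u)>G(v_n)+n\bigl(\phi(\Om_{v_n})-1\bigr)^+.
\]
If $\phi(\Om_{v_n})\le 1$ then $v_n$ is admissible in (\ref{aux}) and the minimality of $u$ gives $G(u)\le G(v_n)$, a contradiction; hence one may assume $\phi(\Om_{v_n})>1$ for every $n$.

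Next, set $t_n:=\phi(\Om_{v_n})^{-1/(N+2\alpha)}\in(0,1)$ and define the rescaled competitor $\tilde v_n(x):=t_n^2\,v_n(x/t_n)$, whose positivity set is $t_n\Om_{v_n}$. Homogeneity of $g$ yields $\phi(t_n\Om_{v_n})=t_n^{N+2\alpha}\phi(\Om_{v_n})=1$, so $\tilde v_n$ is admissible in (\ref{aux}); a straightforward change of variables gives the clean identity
\[
G(\tilde v_n)=t_n^{N+2}\,G(v_n).
\]
The minimality of $u$ then forces $G(u)\le t_n^{N+2}G(v_n)$, which combined with the contradiction hypothesis produces the key inequality
\[
(t_n^{N+2}-1)\,G(v_n)>n\bigl(\phi(\Om_{v_n})-1\bigr).\qquad(\star)
\]

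I would then extract two asymptotic ingredients. First, Poincar\'e in the fixed ball $D$ gives
\[
G(v_n)\ge \tfrac12\|\nabla v_n\|_{L^2}^2-C\|\nabla v_n\|_{L^2},
\]
so the bound $G(v_n)<G(u)\le 0$ forces $\{v_n\}$ to stay uniformly bounded in $H^1_0(D)$, and in particular $\{G(v_n)\}$ to be bounded. Plugging this back into the contradiction inequality gives $n(\phi(\Om_{v_n})-1)\le G(u)-G(v_n)=O(1)$, so $\phi(\Om_{v_n})\to 1$ and $t_n\to 1^-$. Writing $t_n=1-\delta_n$ with $\delta_n\to 0^+$ and Taylor expanding,
\[
t_n^{N+2}-1=-(N+2)\delta_n(1+o(1)),\qquad \phi(\Om_{v_n})-1=(N+2\alpha)\delta_n(1+o(1)),
\]
so dividing $(\star)$ by $\delta_n>0$ yields
\[
-(N+2)(1+o(1))\,G(v_n)>n\,(N+2\alpha)(1+o(1)).
\]
The left-hand side is bounded while the right-hand side diverges to $+\infty$: contradiction, which proves the existence of $\lambda$.

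The only genuinely delicate step is the scaling identity $G(\tilde v_n)=t_n^{N+2}G(v_n)$ together with the matching of the two first-order infinitesimals $(N+2)\delta_n$ and $(N+2\alpha)\delta_n$ on the two sides of $(\star)$; both are precisely what the positive homogeneity of $g$ supplies, while everything else---Poincar\'e, Taylor expansion, the use of minimality---is routine.
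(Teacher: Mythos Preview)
Your argument is correct, and it is genuinely different from the paper's proof. The paper does \emph{not} rescale competitors: instead it introduces, for each fixed $\lambda$, a minimizer $u_\lambda$ of the penalized functional $G_\lambda(v)=G(v)+\lambda(\phi(\Omega_v)-1)^+$, and shows by a level-set argument that $\phi(\Omega_{u_\lambda})\le 1$ for $\lambda$ large. Concretely, testing with $(u_\lambda-t)^+$ and differentiating via coarea produces the inequality $\int_{\{u_\lambda=t\}}\bigl(\tfrac12|\nabla u_\lambda|+\lambda g^2/|\nabla u_\lambda|\bigr)\,d\mathcal H^{N-1}\le |D|$, and then AM--GM together with a separate weighted isoperimetric lemma (Lemma~\ref{lemweightisop}) yields the contradiction.

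Your route is shorter and avoids both the auxiliary minimizer $u_\lambda$ and the isoperimetric Lemma~\ref{lemweightisop}; it is essentially the same rescaling trick the paper uses later in \S\ref{proofbd} for boundedness, transplanted to this lemma. The trade-off is that your argument leans squarely on the $\alpha$-homogeneity of $g$ (through $\phi(t\Omega)=t^{N+2\alpha}\phi(\Omega)$), whereas the paper's proof of this lemma and of Lemma~\ref{lemweightisop} only uses that $g>0$ outside the origin and that we work in the bounded set $D$; so the paper's approach is more robust to non-homogeneous~$g$, while yours is more elementary under the stated hypotheses.
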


It is remarkable that the two problems are equivalent, not only when $\la$
goes to infinity as usual, but for a finite value of $\la$.
\begin{proof}[Proof of Lemma \ref{lemma1reg}] 
For a fixed $\la>0$,
let us denote by $G_\la$ the functional
$$
G_\la(v):=G(v)+\la\left(\phi({\Om_v}) -1\right)^+. 
$$
The existence of a minimizer $u_\la$ for the problem 
$$
\inf_{v\in H^1_0(D)} G_\la(v),
$$ 
is obtained in an analogous way as the
existence of a minimizer for problem (\ref{aux}) above. 
If $\phi({\Om_{u_\la}}) \leq 1$, we get
$G_\la(u_\la)=G(u_\la)$ and since $u_\la$ and $u$ are both
minimizers of $G_\la$ and $G$, the result follows.  
It remains to prove that we cannot have
\begin{equation}\label{reg2}
\phi({\Om_{u_\la}}) > 1,
\end{equation}
for $\la$ large enough. 
Assume, by contradiction, that it is the case and let
us introduce $u^t=(u_\la - t)^+$. 
Differentiating with respect to $t$ and using the
co-area formula, we get
\begin{eqnarray*}
\frac{d}{dt}{\Big|_{t=0}}\int_{\{u_\la>t\}} |\nabla u_\la|^2\,dx &=& -\int_{\{u_\la=t\}} |\nabla u_\la|\,d\mathcal{H}^{N-1},\\
\frac{d}{dt}{\Big|_{t=0}}\int_{\{u_\la>t\}} g^2\,dx &=& - \int_{\{u_\la=t\}} \frac{g^2}{|\nabla u_\la|}\,d\mathcal{H}^{N-1},
\end{eqnarray*}
while
$$
\frac{d}{dt}{\Big|_{t=0}} \int_{\{u_\la>t\}} (u_\la - t)\,dx=- \int_{\{u_\la>t\}} \,dx\,.
$$ 
Thus $\frac{d}{dt}{\Big|_{t=0}} G_\la(u^t)\geq 0$ yields
$$
\int_{\{u_\la=t\}} \frac{1}{2} |\nabla u_\la| + \la \frac{g^2}{|\nabla u_\la|}\,d\mathcal{H}^{N-1}\leq \int_{\{u_\la>t\}} \,dx
\leq |D|.
$$
Now
\begin{equation}\label{20a}
\int_{\{u_\la=t\}} \frac{1}{2} |\nabla u_\la| + \la \frac{g^2}{|\nabla u_\la|}\,d\mathcal{H}^{N-1}\geq \sqrt{2\la}
\int_{\{u_\la=t\}} g\,d\mathcal{H}^{N-1},
\end{equation}
therefore, if we can prove that $\int_{u_\la=t} g\,d\mathcal{H}^{N-1}$
is estimated from below by a positive constant, (\ref{20a}) would lead to the
desired contradiction for $\la$ large enough. 
This is the content of the following lemma.
\end{proof}

\begin{lemma}\label{lemweightisop}
There exists a positive constant $C$ such that for any measurable set
$\omega\subset D$ with $\int_\omega g^2\,dx\geq 1$, it holds
\begin{equation}\label{isop1}
\int_{\partial\omega} g\,d\mathcal{H}^{N-1}\geq C.
\end{equation}
\end{lemma}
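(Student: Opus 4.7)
The plan is to establish a weighted isoperimetric estimate by excising a small ball around the origin, where $g$ degenerates, using the homogeneity of $g$, and then applying the classical Euclidean isoperimetric inequality to the remainder.

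First, set $M:=\sup_{\overline{D}} g$, which is finite by continuity of $g$ and compactness of $D$. The hypothesis $\int_\omega g^2\,dx\geq 1$ combined with $g\leq M$ gives the basic measure bound $|\omega|\geq M^{-2}$. Since $g$ is positively homogeneous of some degree $\alpha>0$ (the case $\alpha=0$ forces $g$ to be constant by continuity at the origin and reduces the statement to the classical isoperimetric inequality), we have
$$
\int_{B(0,r)} g^2\,dx = r^{N+2\alpha}\int_{B(0,1)} g^2\,dx\to 0\quad\text{as }r\to 0^+.
$$
I would fix $r>0$ small enough that $\int_{B(0,r)} g^2\,dx\leq 1/2$; then $|\omega\setminus B(0,r)|\geq (2M^2)^{-1}$, and on $\overline{D}\setminus B(0,r)$ the function $g$ admits a strictly positive minimum $m_r>0$.

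The main step is then to bound $\mathcal{H}^{N-1}\bigl(\partial^*\omega\setminus\overline{B(0,r)}\bigr)$ from below. Introduce the auxiliary set $\tilde\omega:=\omega\setminus\overline{B(0,r)}$. The Euclidean isoperimetric inequality applied to $\tilde\omega$ gives
$$
P(\tilde\omega)\geq c_N|\tilde\omega|^{(N-1)/N}\geq c_N (2M^2)^{-(N-1)/N}.
$$
On the other hand, up to $\mathcal{H}^{N-1}$-null sets, $\partial^*\tilde\omega\subset(\partial^*\omega\setminus\overline{B(0,r)})\cup\partial B(0,r)$, hence
$$
P(\tilde\omega)\leq \mathcal{H}^{N-1}\bigl(\partial^*\omega\setminus\overline{B(0,r)}\bigr)+\mathcal{H}^{N-1}(\partial B(0,r)).
$$
Shrinking $r$ further so that $\mathcal{H}^{N-1}(\partial B(0,r))\leq \tfrac{1}{2} c_N(2M^2)^{-(N-1)/N}$ forces
$$
\mathcal{H}^{N-1}\bigl(\partial^*\omega\setminus\overline{B(0,r)}\bigr)\geq \tfrac{1}{2} c_N(2M^2)^{-(N-1)/N}.
$$

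The conclusion is then immediate:
$$
\int_{\partial\omega} g\,d\mathcal{H}^{N-1}\geq m_r\,\mathcal{H}^{N-1}\bigl(\partial^*\omega\setminus\overline{B(0,r)}\bigr)\geq C,
$$
with $C>0$ depending only on $g$, $D$, $N$, $\alpha$ and independent of $\omega$. The principal obstacle is the vanishing of $g$ at the origin, which prevents a direct comparison with the unweighted isoperimetric inequality; the excision trick, quantified by the homogeneity of $g$, absorbs this degeneracy at the cost of an extra sphere perimeter that is in turn made small by the same small parameter $r$.
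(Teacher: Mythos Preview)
Your argument is correct and follows the same overall strategy as the paper: choose a small ball around the origin so that $\int_{B(0,r)}g^2\le 1/2$, deduce that $\omega$ has substantial mass outside this ball, and then invoke an isoperimetric inequality there where $g$ is bounded below. The difference lies in the isoperimetric tool. The paper argues by contradiction and applies the \emph{relative} isoperimetric inequality in $\mathbb{R}^N\setminus B_R$ (quoting \cite{CGR}), which directly controls $|\omega\setminus B_R|$ by $\mathcal{H}^{N-1}(\partial\omega\setminus B_R)$ without any sphere term appearing. You instead apply the classical Euclidean isoperimetric inequality to $\omega\setminus\overline{B(0,r)}$ and absorb the extra contribution $\mathcal{H}^{N-1}(\partial B(0,r))$ by shrinking $r$. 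Your route is more elementary, avoiding the relative isoperimetric inequality entirely, at the small cost of juggling two smallness conditions on $r$ simultaneously; the paper's route is cleaner once the relative inequality is granted.
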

\begin{proof}
Let us assume, by contradiction, that there exists a sequence $\omega_n$ such that
$$
\int_{\omega_n} g^2\,dx\geq 1\quad\mbox{and}\quad
\int_{\partial\omega_n} g\,d\mathcal{H}^{N-1}\leq \frac{1}{n}\,.
$$
For any $R$, let $B_R$ be the ball centered at $O$ with radius
$R$ and 
$$
g_R=\min\{g(x)\ :\ x\in\rn, |x|=R\}.
$$ 
We fix $R>0$ such that $\int_{B_R} g^2\,dx<1/2$.
We have
$$
|\partial\omega_n\setminus B_R| \; g_R \leq \int_{\partial\omega_n\setminus B_R} g\,d\mathcal{H}^{N-1}\leq \frac{1}{n}.
$$
Thus $|\partial\omega_n\setminus B_R|\leq \frac 1{ng_R}$. 
By the Relative Isoperimetric Inequality on $\R^N\setminus B_R$, see for instance \cite{CGR}, there exists a positive constant
$c_0$ such that
$$
|\omega_n\setminus B_R|\leq c_0 |\partial\omega_n\setminus B_R|^{\frac N{N-1}}.
$$
This implies that $|\omega_n\setminus B_R|$ can be chosen smaller than $1/2M$ where $M=\max_D g^2$, since 
$|\partial\omega_n\setminus B_R|$ tends to zero. 
Hence
$$
1\leq \int_{\omega_n} g^2\,dx=\int_{\omega_n\setminus B_R} g^2\,dx+\int_{\omega_n\cap B_R} g^2\,dx
< \frac{1}{2}+\frac{1}{2},
$$
which is a contradiction.
\end{proof}

\begin{remark}
By homogeneity, statement (\ref{isop1}) of Lemma \ref{lemweightisop} can also be written as:
\begin{equation}\label{isop2}
\exists C>0, \,\mbox{such that}\; \forall \omega\subset D,\qquad 
\int_{\partial\omega} g\,d\mathcal{H}^{N-1}\geq C \left(\int_\omega g^2\,dx\right)^{\frac{\alpha+N-1}{2\alpha+N}}\,.
\end{equation}

Notice that (\ref{isop2}) is a kind of weighted isoperimetric inequality and has its own importance: an interesting question would be to determine its optimal domains. 
Using the theory of quasi-minimizers, one should be able to prove that such an
optimal domain $\omega^*$ is regular. Now, the differentiation with
respect to the domain would give as first order optimality
condition:
$$
Hg+\frac{\p g}{\p n}=\gamma g^2 \quad\mbox{on}\ \p\omega^*,
$$
where $H$ is the mean curvature of the boundary of $\omega^*$ and $\gamma$ a Lagrange multiplier.
\end{remark}

\subsubsection{Details of the proof of regularity} \label{proofreg1}
Let us fix a ball $B(x_0,r)$
and, as a test function in Lemma \ref{lemma1reg}, let us choose $v$
defined as $v=u$ in $\rn\setminus\overline{B(x_0,r)}$ and $v$ solution of
$$
\begin{cases}
-\Delta v=1 & \qquad\text{ in }\; B(x_0,r)\\
v=u & \qquad\text{ on }\;\p B(x_0,r),
\end{cases}
$$ 
inside the ball. By the maximum principle $v>0$ in $B(x_0,r)$ and, therefore, 
$$
\Om_v=\Om_u\cup B(x_0,r),
$$ 
where we recall that $\Om_v=\{x\ :\ v(x)>0\}$.
Thus, since
$\int_{\Om_u} g^2(x)\,dx =1$, we have 
$$
0\leq \int_{\Om_v} g^2(x)\,dx -1 \leq \int_{B(x_0,r)} g^2 \leq C r^N.
$$ 
It follows, from Lemma \ref{lemma1reg}, that $u$ satisfies
$$
\int_{B(x_0,r)}|\nabla(u-v)|^2 \leq \lambda C r^N
$$
and, by classical regularity results (see e.g. Theorem 3.5.2 in \cite{Mo}), $u$ is H\"older continuous on $D$. 
Consequently the set $\Omega_u$ is open as claimed at the beginning of the proof. 
In particular, following \cite{BHP}, we can prove that $u$ is Lipschitz on $D$. 

Let us now study $\Delta u +\chi_{\Omega_u}$. 
The fact that $\Delta u+1=0$ on $\Omega_u$ (in the sense of distributions) is easily obtained using perturbations of the kind $v=u+t\varphi$ with $\varphi\in C_0^\infty(\Omega_u)$.
Then, following step by step \cite[Theorem 2.2,Proposition 2.3]{Bri}, one can prove that $\Delta u +\chi_{\Omega_u}=\mu$, where $\mu$ is a (positive) Radon measure, supported by $\p\Omega_u$ and absolutely continuous with respect to the Hausdorff measure $\mathcal{H}^{N-1}$ in $D$. 
Then, using a blow-up technique near the boundary points of $\p\Omega_u$, we prove more precisely like in \cite[Theorem
5.1]{Bri} that
$$
\Delta u +\chi_{\Omega_u}=g \mathcal{H}^{N-1}\lfloor \p\Omega_u.
$$
We need for that purpose that $g$ is estimated from below: $g(x)\geq
c>0$ which is true as soon as we are far from the origin. We can conclude using \cite[Theorem 2.13, Theorem 2.17]{GuSh}, at least
outside the origin where $g=0$. 
Notice that in Proposition
\ref{propO} below it is proved that it is certainly the case when $\alpha>1$.

\section{Basic Properties}\label{basicpropSec}
In this section, we prove some basic properties of the solution to Problem (\ref{ovfbp1}) or (\ref{min1}). 

The first one is a somewhat technical property which will be necessary in many cases in the sequel. 
It states that the origin $O$ is inside the domain (at least when $\alpha>1$) and it is not surprising of course. Indeed, since $g$ is increasing with respect to $|x|$, one can easily imagine that translating a domain $\Omega$ toward the origin should make $\phi(\Omega)$ smaller while the torsion remains unchanged. Unfortunately we can prove such a property only for $\alpha>1$, as most of our next results.

\begin{proposition}\label{propO}
Let $\Omega$ be a solution of the minimization problem (\ref{min1})
and assume that the homogeneity degree of $g$ satisfies
$\alpha>1$. Then the origin $O$ belongs to $\Omega$.
\end{proposition}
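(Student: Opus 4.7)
I would argue by contradiction: assuming $O \notin \Omega$, I will construct an admissible competitor $\tilde\Omega_d$ with $J(\tilde\Omega_d) < J(\Omega)$ for $d > 0$ small, contradicting the minimality given by Theorem \ref{mainthm}. The construction is a ``hole-filling'': paste a tiny ball centered at $O$ onto $\Omega$ and then rescale to restore the constraint $\phi = 1$.

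Before filling, I would eliminate the case $O \in \partial_{\mathrm{red}}\Omega$ via the overdetermined condition $|\nabla u_\Omega|^2 = -2\mu g^2$ of Corollary \ref{maincor}: Hopf's lemma on the regular boundary forces $|\nabla u_\Omega(O)| > 0$, whereas $g(O)=0$ (by homogeneity with $\alpha>0$) and $\mu \neq 0$ together force $|\nabla u_\Omega(O)| = 0$. Thus $O$ either lies in the (small) singular part of $\partial\Omega$ or $O \notin \overline{\Omega}$.

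In the principal case $O \notin \overline{\Omega}$, pick $d_0 > 0$ with $B(O, d_0) \cap \overline{\Omega} = \emptyset$ and set $\Omega_d := \Omega \sqcup B(O, d)$ for $d \in (0, d_0)$. Because the components are disjoint, the torsion function decouples; using the explicit radial solution on the ball and integrating $g^2$ in polar coordinates (exploiting its degree-$2\alpha$ homogeneity), I expect
\[
J(\Omega_d) \;=\; J(\Omega) \,-\, c_1\, d^{\,N+2},\qquad \phi(\Omega_d) \;=\; 1 \,+\, c_2\, d^{\,N+2\alpha},
\]
with explicit constants $c_1 = c_1(N) > 0$ and $c_2 = c_2(g) > 0$. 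Rescaling by $t_d := \phi(\Omega_d)^{-1/(N+2\alpha)}$ produces an admissible competitor $\tilde\Omega_d := t_d\,\Omega_d$ with $\phi(\tilde\Omega_d) = 1$; applying the scaling law $J(t\Omega) = t^{N+2} J(\Omega)$ and expanding in $d$,
\[
J(\tilde\Omega_d) - J(\Omega) \;=\; -c_1\, d^{\,N+2} \;+\; |J(\Omega)|\,\tfrac{N+2}{N+2\alpha}\, c_2\, d^{\,N+2\alpha} \;+\; o\!\bigl(d^{\,N+2}\bigr).
\]
The hypothesis $\alpha > 1$ enters here crucially: it ensures $N+2\alpha > N+2$, so the torsional gain $-c_1 d^{N+2}$ dominates the rescaling loss $d^{N+2\alpha}$, yielding $J(\tilde\Omega_d) < J(\Omega)$ for $d$ small, a contradiction.

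The main obstacle will be the residual subcase $O \in \partial\Omega \setminus \partial_{\mathrm{red}}\Omega$, where $O$ is a singular boundary point and the clean disjoint-union construction is unavailable. I expect to handle this by a localized version of the same argument: the singular set is $\mathcal{H}^{N-1}$-negligible, so a density estimate on $\Omega$ near $O$ should give $|B(O,d)\setminus\Omega| = (1-o(1))\,|B(O,d)|$ as $d \to 0^+$, after which the comparison of the dominant $d^{N+2}$ term against the $d^{N+2\alpha}$ correction closes in exactly the same way.
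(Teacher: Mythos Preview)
Your exterior case $O \notin \overline{\Omega}$ is exactly the paper's argument: add a small ball at the origin, compute the changes in $J$ and $\phi$ via homogeneity, and observe that the torsion gain of order $d^{N+2}$ beats the constraint defect of order $d^{N+2\alpha}$ when $\alpha>1$.

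The boundary case $O\in\partial\Omega$ is where your outline has genuine gaps. First, there is a circularity problem in invoking the overdetermined condition at $O$: the paper establishes the $C^{1,\beta}$ regularity of $\partial_{\mathrm{red}}\Omega$ (and hence the identity $|\nabla u_\Omega|^2=-2\mu\,g^2$) only \emph{away from the origin}, precisely because the regularity theory from \cite{GuSh} requires $g\geq c>0$; the paper explicitly defers the situation near $O$ to this very proposition, so you cannot quote Corollary~\ref{maincor} at $O$. Second, and more seriously, your density claim for the singular subcase does not follow: that $\partial\Omega\setminus\partial_{\mathrm{red}}\Omega$ is $\mathcal{H}^{N-1}$-null says nothing about the volume density of $\Omega$ at one particular point of that set. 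In fact the hard situation is the opposite of what you anticipate: if $O\in\partial\Omega$ admits no exterior cone (so $\Omega$ is ``fat'' at $O$, possibly with density close to $1$), then $|B(O,d)\setminus\Omega|$ may be $o(d^N)$, the ball no longer contributes a disjoint component, and the additive torsion computation collapses. The paper splits $O\in\partial\Omega$ by a different dichotomy --- exterior cone versus no exterior cone. With an exterior cone one repeats the scaling argument using a small cone $C_\varepsilon$ in place of the ball. Without an exterior cone an entirely different mechanism is needed: a Harnack-type gradient bound from \cite{GuSh} forces $|\nabla u_\Omega(x)|\to 0$ as $x\to O$ in $\Omega$, while a barrier comparison with the stress function of a cone of aperture larger than $\pi$ gives $\limsup_{x\to O}|\nabla u_\Omega(x)|=+\infty$, a contradiction.
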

\begin{proof} 
Let us begin by proving that $O$ cannot be in $\rn\setminus\overline{\Omega}$.
Indeed in this case, there would exist a ball $B_\ep=B(O,\varepsilon)$ of small radius $\varepsilon$ and center at the origin,
such that $B(O,\ep)\subset\rn$. 
Let $\Omega_\varepsilon=\Omega\cup B_\ep$.
We have immediately
$$
\phi(\Omega_\ep)=\phi(\Omega)+\ep^{2\alpha+N}\phi(B_1),
$$
where $B_1$ is the unit ball centered at $O$. In the same way,
$$
J(\Omega_\ep)=J(\Omega)+\ep^{N+2}J(B_1).
$$
Therefore, since $\alpha>1$ we get the following expansion
$$
\phi(\Omega)^{-\frac{2+N}{2\alpha+N}} J(\Omega_\ep)=
\phi(\Omega)^{-\frac{2+N}{2\alpha+N}} J(\Omega)\left[1+\ep^{N+2}
\frac{J(B_1)}{J(\Omega)} + o(\ep^{N+2})\right],
$$
which contradicts the optimality of $\Om$ since, as $J(B_1)<0$, it implies $J(\Om_\ep)<J(\Om)$. 
Continuing to argue by contradiction, let us now assume that $O\in\Omega$. 
If $\partial\Omega$ satisfies an exterior cone condition at $O$, that is if there exists a cone $C_\ep=\ep C_1$ of vertex
$O$ and size $\ep$ such that $C_\ep\cap{\Omega}=\emptyset$, we can do exactly the
same construction as before considering $\Omega_\ep=\Omega\cup C_\ep$ and we
conclude in the same way. 

So far we are reduced to consider $O\in\partial\Omega$ with no exterior cone condition at $O$. 
In this case we use an Harnack inequality as in \cite[Corollary 2.6]{GuSh}. 
Let us denote by $\delta(x)$
the distance from $x$ to $\Om^c$. For $x\in\Om$ close to $O$ we have
\begin{equation}\label{bgp1}
    |\nabla u_\Om(x)|\leq N2^N\left(\sup_{B(x,2\delta(x))} g + \delta(x)\right).
\end{equation}
The latter shows that $|\nabla u_\Om(x)|$ has to go to zero when $x$ approaches $O$.
This leads to a contradiction (if $O\in\partial\Omega$ with no
exterior cone condition at $O$), since  by a simple barrier
argument (comparison between $u_\Om$ and the stress function of a cone
of size greater than $\pi$) we would have $\lim\sup_{x\to 0} |\nabla u_\Om(x)|=
+\infty$.
\end{proof}

Next we prove the monotonicity of the solutions of \eqref{ovfbp1} with respect to $g$.

\begin{theorem}\label{teomonot}
Let $g_1,g_2$ satisfy (\ref{assum_g}) with $\alpha>1$ and let $\Omega_1, \Omega_2$ be solutions to Problem (\ref{ovfbp1}) related to $g_1$ and $g_2$, respectively. 
Assume $g_1(x)\ge g_2(x)$ for every $x\in\R$, then $\Omega_1\subseteq \Omega_2$.
\end{theorem}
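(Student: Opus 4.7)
The plan is a proof by contradiction via a rescaling/contact argument, exploiting the homogeneity of $g$ and the assumption $\alpha>1$.

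Assume $\Omega_1\not\subseteq\Omega_2$. By Proposition \ref{propO} (valid for $\alpha>1$), both $\Omega_i$ contain the origin in their interior, so for $t>0$ small enough $t\Omega_1\subseteq\Omega_2$; define
\[
t^*:=\sup\{t\in(0,1]:\ t\Omega_1\subseteq\Omega_2\}.
\]
I claim $t^*<1$: if $t^*=1$, a limit along $t_n\to 1^-$ with $t_n\Omega_1\subseteq\Omega_2$ gives $\Omega_1\subseteq\overline{\Omega_2}$, which, since $\Omega_1$ is open, forces $\Omega_1\subseteq\Omega_2$, contradicting the hypothesis. Continuity and openness of $t^*\Omega_1$ then yield $t^*\Omega_1\subseteq\Omega_2$ together with a contact point $x_0\in\partial(t^*\Omega_1)\cap\partial\Omega_2$; write $y_0:=x_0/t^*\in\partial\Omega_1$.

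I then compare the torsion functions. By the scaling relation $u_{t\Omega}(x)=t^2u_\Omega(x/t)$, the function $v:=u_{t^*\Omega_1}$ satisfies $v(x)=(t^*)^2 u_{\Omega_1}(x/t^*)$. Both $v$ and $u_2:=u_{\Omega_2}$ solve $-\Delta\,\cdot\,=1$ in $t^*\Omega_1\subseteq\Omega_2$, so $u_2-v$ is harmonic there; on $\partial(t^*\Omega_1)$ one has $v\equiv 0$ and $u_2\geq 0$, hence $u_2\geq v$ in $t^*\Omega_1$ by the maximum principle. Combining the strong maximum principle with Hopf's boundary point lemma at $x_0$ gives either $t^*\Omega_1=\Omega_2$ (the equal case, examined below) or
\[
|\nabla v(x_0)|<|\nabla u_2(x_0)|=g_2(x_0).
\]

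The contradiction then follows from a direct computation that uses the overdetermined condition $|\nabla u_{\Omega_1}(y_0)|=g_1(y_0)$ and the $\alpha$-homogeneity of $g_1$:
\[
|\nabla v(x_0)|=t^*|\nabla u_{\Omega_1}(y_0)|=t^*g_1(x_0/t^*)=(t^*)^{1-\alpha}g_1(x_0).
\]
Since $\alpha>1$ and $t^*<1$, one has $(t^*)^{1-\alpha}>1$, whence $|\nabla v(x_0)|>g_1(x_0)\geq g_2(x_0)$, contradicting the Hopf inequality. The equal case $t^*\Omega_1=\Omega_2$ is ruled out identically: the overdetermined condition on $\partial\Omega_2$ together with the same identity forces $(t^*)^{1-\alpha}g_1\equiv g_2$ on $\partial\Omega_2$, again incompatible with $g_1\geq g_2$ and $(t^*)^{1-\alpha}>1$.

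The main obstacle is the application of Hopf's lemma at the contact point: one needs an interior sphere for $t^*\Omega_1$ at $x_0$, and the identity $|\nabla u_2(x_0)|=g_2(x_0)$ requires $x_0\in\partial_{\rm red}\Omega_2$. By Theorem \ref{mainthm} both reduced boundaries are $C^{1,\beta}$ and $\partial\Omega_i\setminus\partial_{\rm red}\Omega_i$ is $\mathcal{H}^{N-1}$-negligible, so the argument needs a small extra step showing that the first contact can be chosen (or reached as a limit of contacts) at regular points of both $\partial\Omega_1$ and $\partial\Omega_2$.
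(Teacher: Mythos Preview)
Your proof is correct and follows essentially the same approach as the paper: assume $\Omega_1\not\subseteq\Omega_2$, rescale $\Omega_1$ until it touches $\partial\Omega_2$ from inside, and derive a contradiction at the contact point by comparing the normal derivatives of the two stress functions via Hopf's lemma and the homogeneity of $g_1$. The paper's argument is slightly terser---it applies Hopf directly to obtain the strict inequality $g_2(\bar x)>t^{1-\alpha}g_1(\bar x)$ without separating out the ``equal case'' $t^*\Omega_1=\Omega_2$, and it does not comment on the regularity issue at the contact point that you flag at the end---but the substance is the same.
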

\begin{proof}
Assume by contradiction $\Omega_1\not\subset\Omega_2$ and consider $t\Omega_1$ with
$$
t=\sup\{s>0 \ :\ s\Omega_1\subseteq\Omega_2\}.
$$
Then $0<t<1$ (notice that $t>0$ comes from Proposition \ref{propO}) and
there exists $\xx\in\bd t\Omega_1\cup\bd\Omega_2$, with $\nu_{t\Omega_1}(\xx) || \nu_{\Omega_2}(\xx) || \nu$, where $\nu_\Omega(x)$ denotes the outer unit normal to $\bd\Omega$ at $x$ and $\nu\in S^{N-1}$.
\newcommand{\Gr}{
(-6,0) to[out=90,in=200] (0,5) to[out=20,in=180] (5,6) to[out=0,in=90] (8,0) to[out=270,in=0] (0,-5) to[out=180,in=315] (-4,-4) to[out=135,in=270] (-6,0)
}
\newcommand{\disegnoomega}{
(-3.5,0) to[out=90,in=200] (0,5) to[out=10,in=180] (2,5.2) to[out=0,in=90] (4,0) to[out=270,in=0] (0,-3) to[out=180,in=270] (-3.5,0)
}
\begin{figure}[h]
\centering
\begin{tikzpicture}[x=3mm,y=3mm]
\begin{scope}[rotate=300]
\draw[very thick]\Gr;
\draw (8.5,4) node{$\Omega_2$};
\draw[very thick, gray!120, fill=gray!20] (-3.5,0) to[out=90,in=200] (0,5) to[out=10,in=180] (2,5) to[out=0,in=90] (4,0) to[out=270,in=0] (0,-3) 
to[out=180,in=270] (-3.5,0);
\draw (4,4) node[below, gray!140]{$t\Omega_1$};
\fill[black] (0,0) circle (0.5pt);\draw (0,0)node[above]{$O$};
\draw[->, very thick] (90:5)-- ++(110:1);
\draw (0,6) node[right]{$\nu$};
\fill (90:5) circle (1.5pt);
\draw (0,5) node[below left]{$\xx$};
\begin{scope}[xscale=1.5, yscale=1.5]
\draw[dashed, thick, gray!120] (-3.5,0) to[out=90,in=200] (0,5) to[out=10,in=180] (2,5) to[out=0,in=90] (4,0) to[out=270,in=0] (0,-3) 
to[out=180,in=270] (-3.5,0);
\draw (90:5) node[right, gray!120]{$\Omega_1$};
\end{scope}
\end{scope}
\end{tikzpicture}
\caption{$t\Omega_1\subseteq\Omega_2$ with $\xx\in\bd(t\Omega_1)\cap\bd\Omega_2$}\label{Fig-proofunicita}
\end{figure}

We want to compare $u_{t\Omega_1}$ and $u_{\Omega_2}$, the stress functions of $t\Omega_1$ and $\Omega_2$, respectively.
Notice that $u_{t\Omega_1}(x)=t^2 u_{\Omega_1}(\frac xt)$.
Define $w=u_{\Omega_2}-u_{t\Omega_1}$; it satisfies
$$
\begin{cases}
\Delta w = 0 &\qquad\text{ in }t\Omega_1,\\
w\ge 0 &\qquad\text{ on }\bd t\Omega_1,\\
w(\xx)=0.
\end{cases}
$$
Hence by Hopf Lemma, it holds $\frac{\partial w}{\partial \nu}(\xx)>0$.
Notice that
$$
\frac{\partial w}{\partial \nu}(\xx) = |\nabla u_{\Omega_2}(\xx)|-|\nabla u_{t\Omega_1} (\xx)|= g_2(\xx)-tg_1(\dfrac {\xx}t),
$$
since $\nu$ is parallel to $\nabla u_{\Omega_2}(\xx), \nabla u_{t\Omega_1} (\xx)$ and $|\nabla u_{t\Omega_1} (\xx)|=t|\nabla u_{\Omega_1}(\frac {\xx}t)|$, with $\frac{\xx}t\in\bd\Omega_1$.
Hence, by the homogeneity of $g_1$, and the fact that $t<1, \alpha>1$, we get
$$
g_2(\xx)> tg_1(\frac{\xx}t)=t^{1-\alpha}g_1(\xx) > g_1(\xx),
$$
which contradicts the assumption $g_1\ge g_2$.
\end{proof}
\begin{remark} We can prove the above theorem under some weaker assumptions, precisely it is sufficient that at least one between $g_1$ and $g_2$ satisfies (\ref{assum_g}), with $\alpha>1$. In such a case we have however to assume $O$ belongs to the interior of both $\Omega_1$ and $\Omega_2$ and that they are bounded.
In case $g_1$ is the function satisfying assumption (\ref{assum_g}), the proof is precisely the same as above. Notice that in this case $O\in\Omega_1$ and $\Omega_1$ bounded are pleonastic assumptions since they implied by Proposition \ref{propO} and he result os Section 2, respectively.
If $g_2$ satisfies (\ref{assum_g}) with $\alpha>1$ instead of $g_1$, then we argue again by contradiction, very similarly as before (in this case $O\in\Omega_2$ and $\Omega_2$ bounded are pleonastic). Indeed 
if $\Omega_1\not\subset\Omega_2$, consider $t\Omega_2$ with
$$
t=\inf\{s>0 \ :\ \Omega_1\subseteq s\Omega_2\}.
$$
Then $t>1$ and
there exists $\xx\in\bd \Omega_1\cup\bd (t\Omega_2)$, with $\nu_{\Omega_1}(\xx) || \nu_{t\Omega_2}(\xx) || \nu$, where $\nu_\Omega(x)$ denotes the outer unit normal to $\bd\Omega$ at $x$ and $\nu\in S^{N-1}$. Then we can proceed in the same way as before.
\end{remark}

As a natural straightforward corollary of the previous theorem, the uniqueness of the solution follows.

\begin{theorem}\label{propunique}
If $g$ satisfies assumptions (\ref{assum_g}) for $\alpha>1$, then the solution is unique.
\end{theorem}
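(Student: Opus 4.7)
The plan is to obtain Theorem \ref{propunique} as an immediate corollary of the monotonicity statement Theorem \ref{teomonot}. Given two solutions $\Omega_1,\Omega_2$ of (\ref{ovfbp1}) associated to the same function $g$, I apply Theorem \ref{teomonot} with $g_1=g_2=g$: the pointwise hypothesis $g_1\ge g_2$ is trivially satisfied (with equality), so the conclusion yields $\Omega_1\subseteq\Omega_2$; swapping the roles of the two solutions and invoking the same theorem again produces $\Omega_2\subseteq\Omega_1$, whence $\Omega_1=\Omega_2$.

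The one point that deserves a sanity check is whether the proof of Theorem \ref{teomonot} secretly relies on a strict inequality $g_1>g_2$ somewhere, since here I must apply it in the degenerate case $g_1\equiv g_2$. Inspecting that proof, the decisive step is the \emph{strict} bound $g_2(\xx)>tg_1(\xx/t)=t^{1-\alpha}g_1(\xx)$ produced by the Hopf lemma at the touching point $\xx$. Because $\alpha>1$ and $t\in(0,1)$ force $t^{1-\alpha}>1$, the mere weak hypothesis $g_1\ge g_2$ already delivers a contradiction even when $g_1\equiv g_2$, provided $g(\xx)>0$. This positivity is guaranteed by Proposition \ref{propO}: the origin lies strictly inside both $\Omega_1$ and $\Omega_2$, so that the touching point $\xx\in\partial\Omega_1\cap\partial\Omega_2$ cannot coincide with the unique zero of $g$.

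I do not foresee any genuine obstacle. The analytic substance is entirely absorbed into Theorem \ref{teomonot} (which in turn relies on Proposition \ref{propO} to ensure that the critical point lies away from the origin), and Theorem \ref{propunique} collapses to a short formal application of that result twice.
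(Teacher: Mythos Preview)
Your proposal is correct and matches the paper's approach exactly: the authors state Theorem \ref{propunique} as ``a natural straightforward corollary'' of Theorem \ref{teomonot} and give no separate proof. Your double application of the monotonicity theorem with $g_1=g_2=g$, together with the sanity check that the Hopf-lemma contradiction survives the degenerate case $g_1\equiv g_2$ (since $t^{1-\alpha}>1$ and $g(\xx)>0$ by Proposition \ref{propO}), is precisely what the paper has in mind.
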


\section{Starshape and Connectedness}\label{starshapeSec}
We recall that a set $\Omega$ is said {\em starshaped with respect to a point $x_0\in\Omega$} if
$$t(x-x_0)+x_0\in\Omega\quad \text{for every $x\in\Omega$ and every $t\in[0,1]$}.
$$
When $x_0=O$ we simply say that $\Omega$ is starshaped.

\begin{theorem}\label{propstarshape}
If $g$ satisfies ssumptions (\ref{assum_g}) for $\alpha>1$, then $\Omega$ is starshaped (with respect to $O$).
\end{theorem}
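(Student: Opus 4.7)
The plan is to argue by contradiction via a scaling argument, in the same spirit as the proof of Theorem \ref{teomonot}: I would compare $\Omega$ with its dilations $t\Omega$ for $t<1$. Suppose $\Omega$ is not starshaped with respect to $O$. Since $O\in\Omega$ by Proposition \ref{propO} and $\Omega$ is bounded by Theorem \ref{mainthm}, the closed set $T=\{t\in(0,1]\,:\, t\Omega\subseteq\overline\Omega\}$ is a proper subset of $(0,1]$. Let $[t^*,1]$ be the connected component of $T$ containing $1$; then $0<t^*<1$.

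The first step is to show that at the critical scale $t=t^*$ the dilation $t^*\Omega$ touches $\partial\Omega$ from inside at some point $\bar x$. Indeed, if $t^*\overline\Omega\subset\Omega$ (strictly), a compactness argument would yield $s\overline\Omega\subset\Omega$ for all $s$ in some left neighborhood of $t^*$, contradicting the definition of $t^*$. Hence $\partial(t^*\Omega)\cap\partial\Omega\neq\emptyset$, and choosing $\bar x$ in this intersection, the inclusion $t^*\Omega\subset\overline\Omega$ forces the outer unit normals of $t^*\Omega$ and of $\Omega$ to coincide at $\bar x$ (interior tangency). Since $O\in\Omega$, we have $\bar x\neq O$, so in particular $g(\bar x)>0$.

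The second step reproduces the argument of Theorem \ref{teomonot}. Set $w=u_\Omega-u_{t^*\Omega}$ on $t^*\Omega$. Since $-\Delta u_\Omega=-\Delta u_{t^*\Omega}=1$ in $t^*\Omega\subset\overline\Omega$, the function $w$ is harmonic; moreover $w=u_\Omega\geq 0$ on $\partial(t^*\Omega)\subset\overline\Omega$ and $w(\bar x)=0$. Hopf's boundary-point lemma then gives $\partial_\nu w(\bar x)<0$, where $\nu$ denotes the common outer normal. On the other hand, using the scaling $u_{t\Omega}(x)=t^2u_\Omega(x/t)$, the overdetermined condition $|\nabla u_\Omega|=g$ on $\partial\Omega$, the fact that $\bar x/t^*\in\partial\Omega$, and the $\alpha$-homogeneity of $g$, one computes
$$
|\nabla u_{t^*\Omega}(\bar x)|=t^*|\nabla u_\Omega(\bar x/t^*)|=t^*g(\bar x/t^*)=(t^*)^{1-\alpha}g(\bar x),
$$
so that
$$
\partial_\nu w(\bar x)=-g(\bar x)+(t^*)^{1-\alpha}g(\bar x)=g(\bar x)\bigl((t^*)^{1-\alpha}-1\bigr)>0
$$
because $t^*<1$ and $\alpha>1$. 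This contradicts Hopf's inequality, yielding the claim.

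The main obstacle is the first step, and specifically ensuring that the touching point $\bar x$ lies on the regular (reduced) part of $\partial\Omega$, so that Hopf's lemma can be applied—Theorem \ref{mainthm} only guarantees $C^{1,\beta}$ regularity of the reduced boundary. This should be handled either by a generic perturbation/density argument to put $\bar x$ on the reduced boundary, or by replacing Hopf's lemma with a boundary Harnack/barrier estimate analogous to the one used in the proof of Proposition \ref{propO}.
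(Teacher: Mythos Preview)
Your argument is correct and follows essentially the same approach as the paper: compare $u_\Omega$ with the stress function of the critical dilation $t\Omega$, and derive a contradiction from the overdetermined condition via $\alpha$-homogeneity. Two minor differences are worth noting. First, the paper defines the critical scale as $t=\inf\{s\in[0,1]:s\Omega\not\subset\Omega\}$, working upward from $0$ rather than downward from $1$; this sidesteps the (small) issue you need to handle, namely that non-starshapedness of $\Omega$ does not immediately give $t\Omega\not\subseteq\overline\Omega$ for some $t$. Second, the paper only uses the elementary inequality $\partial_\nu w(\bar x)\le 0$ (the outward normal derivative at a boundary minimum is nonpositive), not the strict Hopf inequality---and since your computation yields the \emph{strict} inequality $\partial_\nu w(\bar x)>0$, this already suffices for the contradiction, so the regularity concern you raise is in fact not needed.
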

\begin{proof}
The proof is similar to that one of Theorem \ref{teomonot}.

By contradiction, assume $\Omega$ is not starshaped and let
$$
t=\inf\{s\in[0,1]\,:\, s\Omega\not\subset\Omega\}.
$$
There exists $\xx\in\partial(t\Omega)\cap\partial\Omega$.
Notice that $0<t<1$ since the origin belongs to $\Om$, and $\Omega$ is bounded and not starshaped.
{In a similar way as before}, consider the function
$$
u_t(x)=t^2u_{\Omega}\left(\frac{x}{t}\right)\,.
$$
Then $u_t$ is the stress function of $t\Omega$ and whence
$$
\begin{cases}
-\Delta w=0 & \qquad\text{in }\;t\Om\\
w\geq0 & \qquad\text{on }\;\p(t\Om)\\
w(\xx)=0,
\end{cases}
$$
where $w=u_\Omega-u_t$.
\newcommand{\disegnoomega}{
(-3.5,0) to[out=90,in=200] (0,5) to[out=10,in=180] (2,5.2) to[out=0,in=90] (4,0) to[out=270,in=0] (0,-3) to[out=180,in=270] (-3.5,0)
}
\begin{figure}[h]
\centering
\begin{tikzpicture}[x=5mm,y=5mm]
\begin{scope}[rotate=20]
\draw[very thick] \disegnoomega;
\draw (4,4) node{$\Omega$};
\draw[very thick] (1.5,3.9) circle (0.5);
\begin{scope}[scale=0.65]
\draw[very thick, dashed] \disegnoomega;
\fill[gray!10] \disegnoomega;
\draw (3,0) node{$t\Omega$};
\draw[very thick, dashed] (1.5,3.9) circle (0.5);
\fill[white] (1.5,3.9) circle (0.5);
\end{scope}
\fill[black] (1.45,3.35)node[below]{$\xx$} circle (0.7pt);
\draw[->, thick, gray] (1.45,3.35)-- ++(0,0.6) node[right]{$\nu$};
\fill[black] (0,0) circle (0.5pt);
\draw (0,0)node[above]{$O$};
\end{scope}
\end{tikzpicture}
\caption{$t=\inf\{ s\in[0,1]: \ s\Omega\nsubseteq\Omega\}$.}\label{Fig-proofstarshpe}
\end{figure}

Then
\begin{equation}\label{hopfstarshape}
\frac{\partial w}{\partial\nu}(\xx)\leq0\,,
\end{equation}
where $\nu$ is the outer unit normal at $\xx$ of $\Omega$ and $t\Omega$.
On the other hand
$$
\frac{\partial w}{\partial\nu}(\xx)=|\nabla u_t(\xx)|-|\nabla u_\Omega(\xx)|\,,
$$
then \eqref{hopfstarshape} reads as
\begin{equation}\label{2becontradictedstarshape}
|\nabla u_t(\xx)|\leq |\nabla u(\xx)|=g(\xx)\,.
\end{equation}
Moreover a straightforward calculation gives
$$
\nabla u_t(x)=t\nabla u_{\Omega}(\frac{x}{t})\quad\text{for }x\in t\Omega\,,
$$
which entails
$$
|\nabla u_t(\xx)|=t|\nabla u_{\Omega}(\xx/t)|=tg(\xx/t)=t^{1-\alpha}g(\xx)>g(\xx),
$$
for $1-\alpha<0$, $t<1$ and $\xx\neq 0$ (then $g(\xx)>0$).

The latter contradicts \eqref{2becontradictedstarshape} and the proof of starshapedness is concluded.
\end{proof}

Starshape obviously
implies connectedness. Then when $\alpha>1$ the solution is connected. We are able to
prove this property also when $\alpha<1$.

\begin{proposition}
Let $\alpha\neq 1$and $\Omega$ be a solution of the minimization problem (\ref{min1}) or (\ref{min2}). Then $\Om$ is connected.
\end{proposition}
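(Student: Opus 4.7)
The plan is to split on the sign of $\alpha-1$. If $\alpha>1$, Theorem~\ref{propstarshape} already shows $\Omega$ is starshaped with respect to $O$, and every starshaped set is connected, so nothing remains to prove. Only the case $\alpha<1$ requires an argument, and I would handle it by a contradiction based on the scale-invariant reformulation~(\ref{min2}).

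For $\alpha<1$, set $p:=(N+2)/(N+2\alpha)$ and note that $p>1$ precisely because $\alpha<1$. The functional $\mathcal{F}(A):=\phi(A)^{-p}(-J(A))$ is homothety-invariant, and by the equivalence of (\ref{min1}) and (\ref{min2}) discussed just before Theorem~\ref{mainthm}, a solution $\Omega$ of the shape optimization problem maximizes $\mathcal{F}$ over all admissible $A\subset\R^N$. Assume by contradiction that $\Omega$ is disconnected and write $\Omega=\Omega_1\sqcup\Omega_2$ with $\Omega_1,\Omega_2$ nonempty open disjoint (take one component as $\Omega_1$ and the union of the remaining ones as $\Omega_2$). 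Since $\Omega$ is open by Theorem~\ref{mainthm}, the stress function $u_\Omega$ restricted to each piece coincides with the stress function of that piece, so both $-J$ and $\phi$ are additive:
\[
-J(\Omega)=F_1+F_2,\qquad \phi(\Omega)=P_1+P_2,
\]
with $F_i:=-J(\Omega_i)>0$ (maximum principle) and $P_i:=\phi(\Omega_i)>0$ (because $g>0$ off the origin and $|\Omega_i|>0$).

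Each $\Omega_i$ is itself admissible for (\ref{min2}), so optimality of $\Omega$ yields
\[
\frac{F_i}{P_i^{p}}\;\leq\;\frac{F_1+F_2}{(P_1+P_2)^{p}},\qquad i=1,2.
\]
Clearing denominators, summing in $i$, and dividing by $F_1+F_2$ produces $(P_1+P_2)^{p}\leq P_1^{p}+P_2^{p}$. But for $p>1$ and $P_1,P_2>0$ the map $t\mapsto t^{p}$ is strictly superadditive on $(0,\infty)$, giving the reverse strict inequality $(P_1+P_2)^{p}>P_1^{p}+P_2^{p}$, a contradiction. Hence $\Omega$ is connected.

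The main obstacle is conceptual rather than technical: one has to notice that the scale-invariant version of the problem lets each connected component be plugged back in as an admissible competitor, and that the direction of the inequality between $(P_1+P_2)^{p}$ and $P_1^{p}+P_2^{p}$ is governed precisely by the sign of $\alpha-1$. This also explains transparently why the argument breaks down at $\alpha=1$ (where $p=1$ gives equality), which is consistent with the non-uniqueness phenomenon recorded in the Remark following Corollary~\ref{maincor}.
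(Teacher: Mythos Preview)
Your proof is correct and follows essentially the same approach as the paper: split on the sign of $\alpha-1$, invoke starshapedness for $\alpha>1$, and for $\alpha<1$ use the additivity of $J$ and $\phi$ on disjoint pieces together with the strict superadditivity of $t\mapsto t^{p}$ for $p=(N+2)/(N+2\alpha)>1$ to contradict optimality in the scale-invariant formulation~(\ref{min2}). The only cosmetic difference is that you work with the positive quantity $-J$ and reach $(P_1+P_2)^{p}\le P_1^{p}+P_2^{p}$ directly, whereas the paper keeps $J<0$ and writes the same inequality with the sign carried by $M=J(\Omega)\phi(\Omega)^{-p}<0$.
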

\begin{proof}
For $\alpha>1$ the thesis follows from Theorem \ref{propstarshape} as already pointed out. 
It is then sufficient to consider the case $\alpha < 1$.
Let us assume, by contradiction, that the solution $\Om$ is not connected
and let us write it as
$$
\Om=\Om_1\cup\Om_2,\  \text{ with }\ \Om_1\cap\Om_2=\emptyset,\ |\Om_1|>0,\ |\Om_2|>0.
$$
Since $g$ is positive outside 0, we have $\phi(\Om_i)>0$ for $i=1,2$. Let us
denote by $M$ the value of the minimum
\begin{equation}\label{bgp2}
    M=\phi(\Omega)^{-\frac{2+N}{2\alpha+N}} J(\Omega)<0.
\end{equation}
For each component, we have
$\phi(\Omega_i)^{-\frac{2+N}{2\alpha+N}} J(\Omega_i)\geq M$, $i=1,2$.
Now, $J(\Om)=J(\Om_1)+J(\Om_2)$ and $\phi(\Om)=\phi(\Om_1)+\phi(\Om_2)$.
Therefore
$$
J(\Om)\geq M\left(\phi(\Omega_1)^{\frac{2+N}{2\alpha+N}}+
\phi(\Omega_2)^{\frac{2+N}{2\alpha+N}}\right)>M\left(\phi(\Omega_1)+
\phi(\Omega_2)\right)^{\frac{N+2}{2\alpha+N}}=M\phi(\Omega)^{\frac{N+2}{2\alpha+N}},
$$
the strict inequality coming from the fact that $\alpha<1$ (whence $\frac{N+2}{2\alpha+N}>1$), $\phi(\Omega_i)>0$ for $i=1,2$ and $M<0$. This
clearly leads to a contradiction with (\ref{bgp2}).
\end{proof}

\section{convexity}\label{convexitySec}
Let us recall some definitions which will be usefull later on.
A lower semicontinuous function $u:\rn\to\R\cup\{\pm\infty\}$ is said \emph{quasi-convex} if it has convex sublevel sets, or, equivalently, if
$$
u\left( (1-\la)x_0+\la x_1 \right)\le \max\{ u(x_0),u(x_1) \},
$$
for every $\la\in[0,1]$, and every $x_0, x_1\in\rn$.
If $u$ is defined only in a proper subset $\Omega$ of $\R^n$, we extend $u$ as $+\infty$ in $\R^n\setminus\Omega$ and we say that $u$ is quasi-convex in $\Omega$ if such an extension is quasi-convex in $\rn$.
In an analogous way, $u$ is \emph{quasi-concave} if $-u$ is quasi-convex, i.e. if it has convex superlevel sets.
Obviously, if $u$ (or any one of its powers) is convex (concave) then it is quasi-convex (quasi-concave) but the reverse is not necessarily true.

\begin{remark}\label{rmkalphaconvexity}
Notice that, due to the $\alpha$-homogeneity, the quasi-convexity of $g$ is equivalent (for $\alpha>0$) to the following 
apparently stronger property:
$$
g^{1/\alpha}\,\text{ is convex.}
$$
\end{remark}
Indeed, notice that $h(x)=g^{\frac 1{\alpha}}(x)$ is homogeneous of degree one and it is quasi-convex.
Fix $x_0,x_1\in\rn$ and consider $\la\in [0,1]$.
We want to prove that
$$
\frac{h((1-\la)x_0+\la x_1)}{(1-\la)h(x_0)+\la h(x_1)}\le 1.
$$
Denote by 
$$
\xi=\frac{\la h(x_1)}{(1-\la) h(x_0)+\la h(x_1)};
$$
using the quasi-convexity and the homogeneity of $h$, we get
$$
\frac{h((1-\la)x_0+\la x_1)}{(1-\la)h(x_0)+\la h(x_1)} = h\big((1-\xi)\frac{x_0}{h(x_0)} + \xi \frac{x_1}{h(x_1)}\big) \le \max\Big\{ h(\frac{x_0}{h(x_0)}), h(\frac{x_1}{h(x_1)}) \Big\} = 1.
$$
\begin{theorem}\label{propconvexity}
Let $g$ be a quasi-convex and homogeneous function of degree $\alpha\geq 2$, with $g(x)>0$ for $x\neq 0$.
Then $\Omega$ is convex.
\end{theorem}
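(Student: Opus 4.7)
The plan is to argue by contradiction via a Brunn--Minkowski-type comparison between $\Omega$ and its convex hull.

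Assume the minimizer $\Omega$ of~(\ref{min1}) is not convex and set $K := \mathrm{conv}(\Omega) \supsetneq \Omega$. By strict monotonicity of $J$ and $\phi$ with respect to set inclusion,
\[
|J(K)| > |J(\Omega)|, \qquad \phi(K) > \phi(\Omega) = 1.
\]
After rescaling with $t := \phi(K)^{-1/(N+2\alpha)} \in (0,1)$ so that $\phi(tK) = 1$, the set $tK$ is admissible and the minimality of $\Omega$ gives $|J(tK)| = \phi(K)^{-(N+2)/(N+2\alpha)} |J(K)| \leq |J(\Omega)|$, i.e.\ $F(K) \leq F(\Omega)$ for the scale-invariant functional $F(A) := |J(A)|^{1/(N+2)} \phi(A)^{-1/(N+2\alpha)}$ associated with the equivalent formulation~(\ref{min2}).

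To reach a contradiction I would establish the reverse strict inequality $F(K) > F(\Omega)$, by combining Borell's Brunn--Minkowski inequality for torsional rigidity (asserting the Minkowski-concavity of $|J|^{1/(N+2)}$ on convex bodies, strictly so for non-homothetic bodies) with a matching Brunn--Minkowski-type inequality for the weighted measure $\phi$ at the natural scaling exponent $1/(N+2\alpha)$. The latter exploits the fact that, by Remark~\ref{rmkalphaconvexity}, the function $g^{1/\alpha}$ is convex and $1$-homogeneous, and crucially that $\alpha \geq 2$: these two facts together make $g$ itself convex in the classical sense, which is precisely what is needed for the comparison between $\phi(K)^{1/(N+2\alpha)}$ and $\phi(\Omega)^{1/(N+2\alpha)}$ to combine correctly with Borell's inequality. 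The strict inclusion $K \supsetneq \Omega$ (equivalent to non-convexity of $\Omega$) then translates, via Borell's inequality, into a strict gain for $|J|^{1/(N+2)}$ that controls the corresponding change of $\phi^{1/(N+2\alpha)}$ after the enforced rescaling, giving $F(K) > F(\Omega)$ and the desired contradiction.

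The principal obstacle is the Brunn--Minkowski-type inequality for $\phi$ at the exponent $1/(N+2\alpha)$ under the structural hypothesis on $g$. Since $g^2$ is a \emph{convex} (rather than concave) homogeneous weight, the relevant inequality does not follow directly from the standard Pr\'ekopa--Leindler or Borell--Brascamp--Lieb framework and has to be derived through a tailored variant; it is here that the hypothesis $\alpha \geq 2$ is decisive for closing the strict comparison. An alternative approach would be to work at the level of the overdetermined condition $|\nabla u_\Omega|^2 = g^2$ on $\p\Omega$, deriving from the identity $u_{TT} = g \cdot II(T,T)$ on $\partial\Omega$ (valid for $T$ tangent, where $II$ is the second fundamental form) a maximum principle argument on $\min_T II(T,T)$ to rule out negative curvatures directly; this route also ultimately requires the quasi-convexity of $g$ and the lower bound $\alpha \geq 2$.
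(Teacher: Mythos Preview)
Your plan has a structural gap that I do not see how to close. Borell's Brunn--Minkowski inequality for torsional rigidity compares $|J((1-\lambda)K_0+\lambda K_1)|$ with $|J(K_0)|$ and $|J(K_1)|$ for \emph{convex} bodies under \emph{Minkowski summation}; it gives no direct comparison between a non-convex set $\Omega$ and its convex hull $K=\mathrm{conv}(\Omega)$, which is not a Minkowski average of copies of $\Omega$. Even granting the (unproved) weighted Brunn--Minkowski inequality for $\phi$ that you flag as the principal obstacle, the two concavity statements do not by themselves yield the strict inequality $F(K)>F(\Omega)$: what is needed is a quantitative control of the ratio $|J(K)|/|J(\Omega)|$ against $\phi(K)/\phi(\Omega)$ at the correct exponents, and Minkowski-concavity along sums simply does not speak to a set-versus-hull comparison. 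The alternative curvature route you mention at the end is closer in spirit to what actually works, but is also left as a sketch.

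The paper's argument avoids global functional inequalities entirely and proceeds by a Hopf-type comparison. One shrinks the convex hull $\Omega^*$ homothetically until $t\Omega^*\subset\Omega$ touches $\partial\Omega$ at some $\bar x$ (so $0<t<1$), and compares the stress functions of $t\Omega^*$ and $\Omega$ at $\bar x$: since $w=u_\Omega-u_{t\Omega^*}$ is harmonic and nonnegative in $t\Omega^*$ with $w(\bar x)=0$, one gets $|\nabla u_{t\Omega^*}(\bar x)|\le |\nabla u_\Omega(\bar x)|=g(\bar x)$. The analytic core is a pointwise gradient estimate (Lemma~\ref{claim1}) for $u_{\Omega^*}$ on $\partial\Omega^*$, obtained from the $\tfrac12$-concavity of the torsion function on the convex domain $\Omega^*$: if $\bar x/t=\sum\lambda_i x_i$ with $x_i\in\partial\Omega$, then $|\nabla u_{\Omega^*}(\bar x/t)|\ge\bigl(\sum\lambda_i\sqrt{|\nabla u_\Omega(x_i)|}\bigr)^2$. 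The hypothesis $\alpha\ge 2$ enters exactly here: quasi-convexity plus homogeneity make $g^{1/\alpha}$ convex (Remark~\ref{rmkalphaconvexity}), hence $\sqrt{g}$ is convex when $\alpha\ge 2$, so the lemma matches the boundary data $|\nabla u_\Omega(x_i)|=g(x_i)$ and yields $|\nabla u_{t\Omega^*}(\bar x)|\ge t^{1-\alpha}g(\bar x)>g(\bar x)$, contradicting the Hopf inequality.
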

\begin{proof}
The proof follows the same lines of that of starshape. 
By contradiction, assume $\Omega$ is not convex and let $\Omega^*$ be its convex hull.
Denote by $u_\Omega$ the solution of \eqref{i1}, by $u^*$ its concave envelope, defined in $\Omega^*$, and
by $u_{\Omega^*}$ the stress function of $\Omega^*$.
Let
$$
t=\sup\{s\in[0,1]\,:\, s\Omega^*\subset\Omega\}
$$
and let
$$
\xx\in\partial(t\Omega^*)\cap\partial\Omega\,.
$$
Notice that $t>0$ since $0\in\Omega$ and $\Omega^*$ is bounded (for $\Omega$ is bounded) while $t<1$ for $\Omega$ is not convex.
It is easily seen that
$$
u_t(x)=t^2 u_{\Omega^*}\left(\frac{x}{t}\right),
$$
solves \eqref{i1} with $t\Omega^*$ in place of $\Omega$, (that is $u_t$ is the stress function of $t\Om^*$) whence
$$
\begin{cases}
-\Delta w=0 & {\rm in }\;t\Om^*\\
w\geq0 & {\rm on }\;\p(t\Om^*)\\
w(\xx)=0,
\end{cases}
$$
where $w=u_{\Omega}-u_t$.
\newcommand{\disegnocvx}{(-10,0) to[out=90,in=180] (-7,5) to[out=0,in=180] (0,2) to[out=0,in=180] (4,5) to[out=0,in=90] (8,0) to[out=270,in=0] (2,-5) to[out=180,in=270] (-10,0)}
\begin{figure}[h]
\centering
\begin{tikzpicture}[x=3.5mm,y=3.5mm]
\draw[very thick, gray] \disegnocvx;
\draw[gray] (-10,0) node[above right]{$\Omega$};
\draw[very thick, dashed, gray] (-7,5)-- (4,5);
\begin{scope}[scale=0.4]
\draw[very thick] (-10,0) to[out=90,in=180] (-7,5) to[out=0,in=180] (4,5) to[out=0,in=90] (8,0) to[out=270,in=0] (2,-5) 
to[out=180,in=270] (-10,0);
\draw[thick, dashed] \disegnocvx;
\draw (2,-5)node[above]{$t\Omega^*$};
\end{scope}
\draw[->] (0,2) node[below]{$\xx$}--node[right]{$\nu$}(0,2.8);
\draw[->] (-2.8,2)node[below]{$x_0$}-- (-2.8,2.8)node[left]{$\nu$};
\draw[->] (1.6,2)node[below]{$x_1$}--(1.6,2.8)node[right]{$\nu$};
\end{tikzpicture}
\caption{$t=\sup\{ s\in[0,1]\ :\ s\Omega^*\subseteq\Omega\}$.}\label{Fig-proofconvexity}
\end{figure}
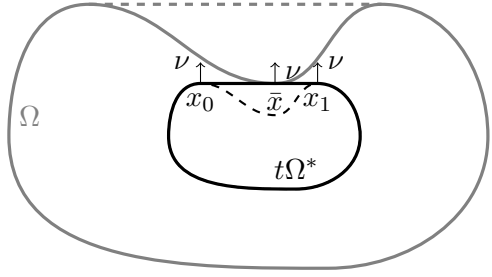

Then
\begin{equation}\label{hopf}
\frac{\partial w}{\partial\nu}(\xx)\leq0\,,
\end{equation}
where $\nu$ is the common outer unit normal at $\xx$ of $\Omega$ and $t\Omega^*$.
On the other hand
$$
\frac{\partial w}{\partial\nu}(\xx)=|\nabla u_t(\xx)|-|\nabla u_{\Omega}(\xx)|\,,
$$
then \eqref{hopf} reads
\begin{equation}\label{2becontradicted}
|\nabla u_t(\xx)|\leq |\nabla_{\Omega}u(\xx)|\,.
\end{equation}

We will contradict the latter by giving estimates of the gradient of $u_t$ which show that the reverse must be true.
To do this we need the following property, whose proof is given at the end of the section.

\begin{claim}\label{claim1}
If $x\in\partial\Omega^*\setminus\partial\Omega$, then
\begin{equation}\label{eqCLAIM1}
|\nabla u_{\Omega^*}(x)|\geq\left(\sum_{i=1}^N \la_i \sqrt{\nabla u(x_i)}\right)^2,
\end{equation}
where $x_1, ... x_N\in\partial\Omega$ are such that $x=\sum_{i=1}^N \la_ix_i$, for some $\la_1,...,\la_N\in[0,1]$ with $\sum_{i=1}^N\la_i=1$.
\end{claim}

With Lemma \ref{claim1} at hands, it is easy to prove that \eqref{2becontradicted}
can not hold true.
Indeed a straightforward calculation gives
\begin{equation}\label{nablaut}
\nabla u_t(x)=t\nabla u_{\Omega^*}(\frac{x}{t})\quad\text{for }x\in t\Omega^*.
\end{equation}

Notice that, as $\xx\in\bd t\Om^*$, there exist $x_1,...,x_N\in \bd t\Om$ and $\la_1,...,\la_N\in[0,1]$ such that $\sum_{i=1}^N\la_i=1$ and $\xx=\sum_{i=1}^N \la_ix_i$ and $\nu(\xx)||\nu(x_0)||\nu(x_1)||\nu$, where $\nu(x)$ indicates the outer unit normal vector to $t\Om^*$ at $x\in\bd t\Om$.
Moreover, since $\xx\in\partial t\Omega^*\setminus\partial\Omega$, we have
$$
\begin{array}{rl}
|\nabla u_t(\xx)|= t|\nabla u_{\Omega^*}(\xx/t)|
\geq& t\left(\sum_{i=1}^N \la_i\sqrt{|\nabla u(x_i/t)|}\right)^2\\
=&t\left(\la_i \sqrt{g(\frac{x_i}t)} \right)^2\\
=&t^{1-\alpha} \left( \sum_{i=1}^N \la_i \sqrt{g(x_i)} \right)^2\,.
\end{array}
$$
On the other hand, Remark \ref{rmkalphaconvexity} yields 
$$
g(\xx)\leq\left(\sum_{i=1}^N \la_i g(x_i)^{\frac 1{\alpha}}\right)^\alpha\,,
$$
and then
$$
g(\xx)\leq\left(\sum_{i=1}^N \la_i g(x_i)^{\frac 12}\right)^2,
$$
since $\alpha\geq 2$.
We finally get
$$
|\nabla u_t(\xx)|\geq t^{1-\alpha}g(\xx)>g(\xx)=|\nabla u(\xx)|\,,
$$
where the strict inequality holds because $t<1$, $\alpha>1$ and $g(\xx)>0$, since $\xx\neq 0$ for $0\in\Omega$.
The latter contradicts \eqref{2becontradicted} and the proof is concluded.
\end{proof}

We give the proof of Lemma \ref{claim1}.
\begin{proof}[{Proof of Lemma \ref{claim1}}]
We use the same notations and a similar construction as in the proof of Theorem \ref{propconvexity}.
So, let $\Om^*$ be the convex hull of $\Om$ and let $u_{\Om^*}, u_\Om$ be their stress functions, respectively.
By Comparison Principle we have
\begin{equation}\label{u*compar}
u_\Omega\leq u_{\Omega^*}\quad\text{in }\Omega\,.
\end{equation}
Since
$$
u_\Omega=u_{\Omega^*}\quad\text{on }\partial\Omega^*\cap\partial\Omega\,,
$$
equation \eqref{u*compar} entails
$$
|\nabla u_{\Omega^*}(x)|\geq|\nabla u_\Omega(x)|\quad\text{for every } x\in\partial\Omega^*\cap\partial\Omega\,.
$$
Fix $\xx\in\bd\Om^*$; there exist $x_1,...,x_N\in\bd\Om$ such that $\xx=\sum_{i=1}^N \la_ix_i$, with $\la_i\in[0,1], \sum \la_i=1$, and $Du_\Om(x_i)$ are parallel to $|Du_{\Om^*}(\xx)|$ for $i=1,...,N$ since the outer unit normal vector to $\Om^*$ at $\xx$ and to $\Om$ at $x_i$ must coincide. Let us set $\nu=\nu(\xx)=\nu(x_0)=\nu(x_1)$.
In particular we have
\begin{equation}\label{nablau*compar}
|\nabla u_{\Omega^*}(x_i)|\geq|\nabla u_\Omega(x_i)|\quad\text{for }i=1,...,N\,.
\end{equation}
Next we prove that
\begin{equation}\label{STEP2CLAIM1}
|\nabla u_{\Omega^*}(\xx)|\geq\left( \sum_{i=1}^N  \sqrt{|\nabla u_{\Omega^*}(x_i)|} \right)^2\,.
\end{equation}
By the convexity of $\Omega^*$ it is well known that $u_{\Omega^*}$ is $1/2$-concave (see \cite{CaffSp, Kenn, Kore}), that is
\begin{equation}\label{vconcave}
v=\sqrt{u_{\Omega^*}}\quad\text{is concave in }\overline\Omega^*\,.
\end{equation}
Moreover, since
$$
\sum_{i=1}^N\la_i(x_i-t\nu)=\xx-t\nu\,,
$$
for every small enough $t>0$ we have
$$
u_{\Omega^*}(\xx-t\nu)=v(\xx-t\nu)^2\geq \left(\sum_{i=1}^N\la_i v(x_i-t\nu)\right)^2\,.
$$
By the definition of $v$  the latter reads as
$$
\frac{u_{\Omega^*}(\xx-t\nu)}{t}\geq \left(\sum_{i=1}^N\la_i\sqrt{\frac{u_{\Omega^*}(x_i-t\nu)}{t}}\right)^2\,.
$$
Passing to the limit as $t\to 0^+$ and taking into account that
$$u_{\Omega^*}(\xx)=u_{\Omega^*}(x_i)=0\,, \quad i=1,...,N,
$$
we get \eqref{STEP2CLAIM1}.

Coupling \eqref{STEP2CLAIM1} and \eqref{nablau*compar} gives \eqref{eqCLAIM1} and the proof is concluded.
\end{proof}

\section{Symmetries}\label{symmetrysec}

In this section we prove that some symmetry properties of $g$ are inherited by the solution $\Om$.
In particular in Theorem \ref{theoradial} we consider the radial case, while Theorem \ref{theosteiner} treats the Steiner symmetric case.
In both cases the presented technique is based on rearrangements of sets and functions.

\begin{theorem}\label{theoradial}
If $g$ is radial and satisfies assumptions (\ref{assum_g}), then the solution $\Omega$ to the minimization problem (\ref{min1}) is a ball.
\end{theorem}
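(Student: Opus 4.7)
The plan is to prove the theorem by Schwarz symmetrization centered at the origin, using that $g^2$, being radial and positively homogeneous of degree $2\alpha>0$, is strictly increasing in $|x|$. Let $\Omega^{*}$ denote the Schwarz rearrangement of $\Omega$, i.e.\ the open ball centered at $O$ with $|\Omega^{*}|=|\Omega|$. First, since $g^{2}$ is radial and increasing in $|x|$, the Hardy--Littlewood inequality applied to the characteristic functions of $\Omega$ and $\Omega^{*}$ yields
$$
\phi(\Omega^{*})=\int_{\Omega^{*}}g^{2}\,dx\le\int_{\Omega}g^{2}\,dx=\phi(\Omega)\le 1,
$$
so $\Omega^{*}$ is admissible for Problem (\ref{min1}). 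Next, by the classical Saint--Venant inequality (a consequence of the P\'olya--Szeg\H{o} inequality applied to the stress function $u_{\Omega}$ and the conservation of $L^{2}$ norm under rearrangement), the ball maximizes the torsional rigidity among sets of equal measure, which reads $J(\Omega^{*})\le J(\Omega)$. By the minimality of $\Omega$ and the admissibility of $\Omega^{*}$, these two must coincide: $J(\Omega)=J(\Omega^{*})$.

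The crucial second step is to upgrade the rearrangement inequality for $\phi$ to an equality. Suppose, by contradiction, that $\phi(\Omega^{*})<\phi(\Omega)\le 1$. Set $t=\bigl(\phi(\Omega)/\phi(\Omega^{*})\bigr)^{1/(2\alpha+N)}>1$, so that $\phi(t\Omega^{*})=t^{2\alpha+N}\phi(\Omega^{*})=\phi(\Omega)\le 1$ and $t\Omega^{*}$ is admissible. Using the scaling identity $J(t\Omega^{*})=t^{N+2}J(\Omega^{*})$ together with $J(\Omega^{*})<0$ (which holds for any set of positive measure since $-J(\Omega^{*})=\tfrac12\int_{\Omega^{*}}u_{\Omega^{*}}\,dx>0$), we would obtain
$$
J(t\Omega^{*})=t^{N+2}J(\Omega^{*})<J(\Omega^{*})=J(\Omega),
$$
contradicting the optimality of $\Omega$. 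Hence $\phi(\Omega^{*})=\phi(\Omega)$.

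Finally, I would exploit the equality case in the Hardy--Littlewood inequality. Writing $\Omega^{*}=B_{R}(O)$, we have $|\Omega\setminus\Omega^{*}|=|\Omega^{*}\setminus\Omega|$, and since $g^{2}$ is \emph{strictly} increasing in $|x|$, it satisfies $g^{2}(x)>g^{2}(R)$ on $\{|x|>R\}$ and $g^{2}(x)<g^{2}(R)$ on $\{|x|<R\}$. Consequently,
$$
\phi(\Omega)-\phi(\Omega^{*})=\int_{\Omega\setminus\Omega^{*}}g^{2}\,dx-\int_{\Omega^{*}\setminus\Omega}g^{2}\,dx\ge g^{2}(R)\bigl(|\Omega\setminus\Omega^{*}|-|\Omega^{*}\setminus\Omega|\bigr)=0,
$$
with strict inequality whenever either symmetric difference has positive measure. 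Equality therefore forces $\Omega=\Omega^{*}$ up to a set of zero measure, i.e.\ $\Omega$ is a ball centered at the origin.

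I expect the main technical point to be the verification that $\Omega^{*}$ is truly admissible and the scaling argument leading to $\phi(\Omega^{*})=\phi(\Omega)$, since it relies delicately on the sign of $J$ and the homogeneity of $\phi$; the equality case of the rearrangement inequality is then a clean consequence of the strict radial monotonicity of $g^{2}$, and Saint--Venant itself is used only as a ``$\le$'' statement (we do not need its own equality case to conclude, which sidesteps the usual subtleties of Brothers--Ziemer type arguments).
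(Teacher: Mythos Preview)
Your proof is correct and takes a genuinely different route from the paper's. The paper also Schwarz-symmetrizes $\Omega$ to $\Omega^{\#}$ and shows $\phi(\Omega^{\#})<\phi(\Omega)$ from the strict radial monotonicity of $g^{2}$, but then it obtains the contradiction by invoking the \emph{strict} P\'olya--Szeg\H{o} inequality (the Brothers--Ziemer equality case, cited from Kawohl) to deduce $J(\Omega^{\#})<J(\Omega)$ directly whenever $u_{\Omega}$ is not radial. You instead use Saint--Venant only as a weak inequality $J(\Omega^{*})\le J(\Omega)$, then exploit the homogeneity of $\phi$ and the sign of $J$ via the scaling $t\Omega^{*}$ to force $\phi(\Omega^{*})=\phi(\Omega)$, and finally read off $\Omega=\Omega^{*}$ from the (elementary) equality case in Hardy--Littlewood for a strictly increasing radial weight. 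Your argument is a bit longer but more self-contained: it sidesteps the Brothers--Ziemer machinery entirely, trading it for the scaling structure of the problem, which is exactly the feature the paper exploits elsewhere (e.g.\ in passing from the constrained to the unconstrained formulation). Both proofs conclude only that $\Omega$ is a ball up to a null set; upgrading this to exact equality uses the regularity of $\Omega$ from Theorem~\ref{mainthm}, which neither argument spells out.
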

Notice that a radial solution can always be explicitly computed.
The above result states that in fact not radial solutions cannot exist.
Let us remark that for $\alpha>1$ this can also be seen as a straightforward corollary of Theorem \ref{propunique}.
\begin{proof}
Let us denote by $\Om^\#$ the ball of the same volume than $\Om$ and centre at $O$. 
Assume $\Omega\setminus\Omega^\#$ to be a set of positive measure.
We are going to reach a contradiction proving that the value $J(\Om^\#)$ is strictly better than the minimum value $J(\Omega)$.

Notice that, since $g$ is radial and increasing in each direction, we have 
$$
\Om^\#=\{x\in\rn \ :\ g(x)\le \bar t\},$$ 
for some $\bar t>0.$

\newcommand{\disegnoomega}{
(-3.5,0) to[out=90,in=200] (0,5) to[out=10,in=180] (2,5.2) to[out=0,in=90] (4,0) to[out=270,in=0] (0,-3) to[out=180,in=270] (-3.5,0)
}
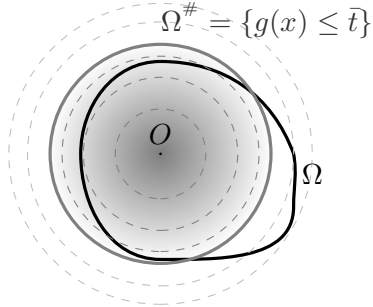
\begin{figure}[h]
\centering
\begin{tikzpicture}[x=3.5mm,y=3.5mm]
\begin{scope}[rotate=270]
\shade [shading=radial] (0,0) circle (4.15);
\fill[black] (0,0) circle (0.5pt);
\draw (0,0)node[above]{$O$};
\draw[very thick] (-3.5,0) to[out=90,in=200] (0,5) to[out=10,in=180] (2,5) to[out=0,in=90] (4,0) to[out=270,in=0] (0,-3) to[out=180,in=270] (-3.5,0);
\draw[very thick, gray] (0,0) circle (4.15);
\draw[dashed, gray] (0,0) circle (1.7); \draw[dashed, gray] (0,0) circle (2.9); 
\draw[dashed, gray] (0,0) circle (3.7); \draw[dashed, gray!50] (0,0) circle (5); \draw[dashed, gray!50] (0,0) circle (5.7); 
\end{scope}
\draw (5.7,0) node[below]{$\Omega$};
\draw[black!80] (4,4) node[above]{$\Omega^{^\#}=\{ g(x)\le \bar t \}$};
\end{tikzpicture}
\caption{The level lines of $g$ are monotone increasing concentric balls.
}
\end{figure}

Hence
$$
\inf_{x\in\Om\setminus\Om^\#} g^2(x) > \bar t=\sup_{x\in\Om^\#\setminus\Om} g^2(x),
$$
which implies $\phi(\Om^\#)< \phi(\Om)$, since
\begin{eqnarray*}
\phi(\Om^\#)&\le& \int_{\Om^\#\cap\Om}g^2(x)\,dx + |\Om^\#\setminus\Om|\sup_{x\in\Om^\#\setminus\Om} g^2(x)\\ 
&<& \int_{\Om^\#\cap\Om}g^2(x)\,dx + |\Om^\#\setminus\Om|\inf_{x\in\Om\setminus\Om^\#} g^2(x) \\
&\le& \int_{\Om^\#\cap\Om}g^2(x)\,dx + \int_{\Om\setminus\Om^\#}g^2(x)\,dx\\ 
&=& \phi(\Om).
\end{eqnarray*} 

In order to compare the stress function of $\Om$ with that of $\Om^\#$, we compare both of them with a rearrangement of $u_\Om$.
More precisely, let $u^\#$ be the Schwarz symmetric of the stress function $u_\Om$, that is the radial function whose sublevel sets are concentric balls of the same measure than the corresponding sublevel sets of $u$ (that is $|\{u^\#<t\}|=|\{u_\Om<t\}|$). 
We compare $u^\#$ with the stress functions of $\Om^\#$, $u_{\Om^\#}$ (notice that it is a radial function too) and that of $\Om$.
Recalling the characterization of stress functions in (\ref{i3}), we have
\begin{eqnarray*}
J(\Om^\#) 
 \le  \frac 12\int_{\Om^\#}\|Du^\# \|^2 \, dx - \int_{\Om^\#} {u^\#}^2 \, dx;
\end{eqnarray*}
moreover, by classical results (see for instance \cite{Kaw}), it holds
\begin{eqnarray*}
\int_{\Om^\#} {u^\#}^2 \, dx & = &  \int_{\Om} {u_\Om}^2 dx,\\
\int_{\Om^\#} \| Du^\# \|^2 \, dx & < & \int_{\Om} \| Du_{\Om}\|^2 dx,
\end{eqnarray*}
where the strict inequality holds since $u_{\Omega}$ is not radial (otherwise $\Omega$ would be a ball) and hence it does not coincides with $u^\#$ (see \cite{Kaw} Corollary 2.33).
These latters entail
$J(\Om^\#)< J(\Om)$, which is a contradiction. 
This shows that $\Omega$ is a ball, up to a zero measure set.
\end{proof}

Notice that, under stronger assumptions a similar result on the symmetry of the solution to the torsion problem has been proved by  A. Greco. in \cite{Greco1}.
More precisely he considered Problem (\ref{ovfbp1}) with $g(x)=c |x|$, and he proved that if a solution exists and the set $\Om$ contains the origin, then the set must be a ball. 

We want to use Theorem \ref{thmboundlevelsets} to estimate the stability of the radial setting. 
Roughly speaking we deal with the following questions: 
\emph{if, in some sense, $g$ is close to be a radial function, is $\Omega$ close (in a suitable sense) to be a ball? And how does the distance of $\Omega$ from the ball shape depend on the distance of $g$ from the radial shape?}

It is then necessary to specify which kind of {distance} is convenient to use to measure the closeness of $g$ to be radial.

We present two stability results which are in fact corollaries of Theorem \ref{teomonot}.
More precisely, we consider two different kinds of distances of functions: in Proposition \ref{corollstab} we ask $g$ to be \emph{quasi radial} in the $L^{\infty}$ norm, while in Proposition \ref{propstab} the distance between the function $g$ and a radial function is controlled in terms of the $\alpha$-homogeneous sublinear function $|x|^\alpha$. 
Notice that this latter is in fact quite natural since the space of $\alpha$-homogeneous functions is considered.

\begin{proposition}\label{corollstab}
Let $g, h$ satisfy (\ref{assum_g}) with $\alpha>1$ and assume $h$ to be radial with 
$$
| g(x)-h(x) |\le \ep,
$$ 
for every $x\in\rn$ and some $\ep>0$.
Let $\Omega$ be the solution of (\ref{ovfbp1}) related to $g$, then there exist $R_{\ep}>r_{\ep}>0$ such that  $B(O, r_{\ep})\subseteq\Omega\subseteq B(O, R_{\ep})$ with $|R_\ep-r_\ep|=\mathscr{O}(\ep)$.
\end{proposition}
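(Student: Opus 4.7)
The plan is to exploit the monotonicity result (Theorem~\ref{teomonot}) to sandwich $\Omega$ between two balls associated with purely radial, $\alpha$-homogeneous functions bracketing $g$, and then use the explicit radial formula to estimate the gap between the two radii.

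Since $h$ is radial and positively $\alpha$-homogeneous, $h(x)=c|x|^{\alpha}$ for some $c>0$. Reading the hypothesis $|g(x)-h(x)|\le\ep$ essentially on the unit sphere (the only non-trivial interpretation for two $\alpha$-homogeneous functions, as a genuine bound on all of $\R^N$ would force $g\equiv h$ by rescaling), I would set
$$c_-=\min_{|x|=1} g(x), \qquad c_+=\max_{|x|=1} g(x), \qquad |c_\pm-c|\le\ep,$$
and define $g_\pm(x)=c_\pm|x|^{\alpha}$. By $\alpha$-homogeneity, $g_-(x)\le g(x)\le g_+(x)$ for every $x\in\R^N$, and both $g_\pm$ satisfy (\ref{assum_g}) with the same exponent $\alpha>1$, so Theorem~\ref{teomonot} applies to each of the pairs $(g_-,g)$ and $(g,g_+)$. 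Denoting by $\Omega_\pm$ the solutions of (\ref{ovfbp1}) relative to $g_\pm$, monotonicity yields $\Omega_+\subseteq\Omega\subseteq\Omega_-$.

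Next, since $g_\pm$ are radial, Theorem~\ref{theoradial} forces $\Omega_\pm$ to be balls centered at the origin, say $\Omega_\pm=B(O,R_\pm)$. The radius $R_\pm$ is determined explicitly: the stress function of $B_R$ is $u_{B_R}(x)=(R^2-|x|^2)/(2N)$, whose gradient has norm $R/N$ on $\partial B_R$; the overdetermined condition $R/N=c_\pm R^{\alpha}$ then gives
$$R_\pm=(c_\pm N)^{-1/(\alpha-1)}.$$
Combined with the previous inclusion, $B(O,R_+)\subseteq\Omega\subseteq B(O,R_-)$. Setting $r_\ep=R_+$ and $R_\ep=R_-$, and using the local Lipschitz continuity of $c\mapsto(cN)^{-1/(\alpha-1)}$ on $(0,\infty)$ together with $|c_+-c_-|\le 2\ep$ and $c_\pm\to c>0$ as $\ep\to 0$, the mean value theorem yields $|R_\ep-r_\ep|=\mathscr{O}(\ep)$, which is the desired quantitative bound.

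The main (and essentially only) obstacle in this scheme is interpretational rather than technical: in the class of $\alpha$-homogeneous functions a global $L^\infty$ bound is vacuous unless understood as a bound on a sphere (equivalently on the $0$-homogeneous restriction $g(x/|x|)$), and one must check that the resulting sandwich $g_-\le g\le g_+$ indeed places the problem within the scope of Theorem~\ref{teomonot}. Once this is clarified, the proof reduces to the explicit radial computation above together with the continuity of the radius as a function of the proportionality constant.
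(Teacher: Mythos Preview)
Your proof is correct and follows essentially the same strategy as the paper: sandwich $g$ between two radial $\alpha$-homogeneous functions, apply the monotonicity theorem (Theorem~\ref{teomonot}) together with the radial symmetry result (Theorem~\ref{theoradial}) to trap $\Omega$ between two explicit balls, and estimate the gap between their radii. The only cosmetic difference is your choice of bracketing functions $c_\pm|x|^\alpha$ with $c_\pm=\min/\max_{S^{N-1}}g$, whereas the paper uses $h/(1\mp\ep)$; your explicit remark that the $L^\infty$ bound must be read on the unit sphere is a welcome clarification.
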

\begin{proof}
Since the function $h$ is $\alpha$-homogeneous and radial, there exists a positive constant $k$ such that $h(x)=k|x|^{\alpha}$, where in fact $k=h|_{S^{N-1}}$. 
Define 
$$
h_-(x)=\frac{h(x)}{1-\ep},\qquad h_+(x)=\frac{h(x)}{1+\ep};
$$
it holds $h_+\le g\le h_-$ since
$$
\{ x\in\rn\ :\ h_+(x) \le 1 \}\subseteq \{x\in\rn\ :\ g(x)\le 1 \}\subseteq\{ x\in\rn\ :\ h_-(x)\le 1 \}.
$$
Moreover $h_-,h_+$ satisfy the hypothesis of Theorem \ref{teomonot} and hence 
$$
\Omega_{-}\subseteq \Omega \subseteq \Omega_{+},
$$
where $\Omega_{-},\Omega_{+}$ are the solutions to (\ref{ovfbp1}) related to $h_-,h_+$, respectively.

By Theorem \ref{theoradial} there exist $R_{\ep}>r_{\ep}>0$ such that $\Omega_{-}= B(O, r_{\ep})$ and $\Omega_+= B(O, R_{\ep})$.
In particular, solving explicitly Problem \ref{ovfbp1} in the radial homogeneous case, we get 
$$
R_{\ep}=\big(\frac{1+\ep}{kN}\big)^{\frac 1{\alpha-1}},\qquad r_{\ep}=\big(\frac{1-\ep}{kN}\big)^{\frac 1{\alpha-1}},
$$
since the stress function of a ball $B(O,\rho)$ is $u(x)=\frac{\rho^2-|x|^2}{2N}$, with $|Du||_{\partial B(O,\rho)}=\frac{\rho}N$.
Comparing $R_\ep$ and $r_\ep$ we have 
$$
|R_\ep-r_\ep|=\frac 2{(\alpha-1)(Nk)^{\frac 1{\alpha-1}}} (\ep+o(\ep)),
$$
which entails the thesis.
\end{proof}

\begin{proposition}\label{propstab}
Let $g, h$ satisfy (\ref{assum_g}) with $\alpha>1$ and assume $h$ to be radial with 
$$
| g(x)-h(x)| \le \ep|x|^{\alpha},
$$ 
for every $x\in\rn$ and some $\ep>0$.
Let $\Omega$ be the solution of (\ref{ovfbp1}) related to $g$. 
There exist $R'_{\ep}>r'_{\ep}>0$ such that  $B(O, r'_{\ep})\subseteq\Omega\subseteq B(O, R'_{\ep})$ with $|R'_{\ep}-r'_{\ep}|=\mathscr{O}(\ep)$.
\end{proposition}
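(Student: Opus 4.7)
The plan is to mimic the strategy of Proposition \ref{corollstab}, simply replacing the $L^\infty$ bracketing of $g$ by an $\alpha$-homogeneous bracketing, which is more natural in view of the $\alpha$-homogeneity of all the involved functions. Since $h$ is radial and positively $\alpha$-homogeneous, there exists $k>0$ with $h(x)=k|x|^\alpha$, and the hypothesis reads as
\begin{equation*}
(k-\ep)|x|^\alpha\;\le\;g(x)\;\le\;(k+\ep)|x|^\alpha\qquad\text{for every }x\in\rn.
\end{equation*}
First I would fix $\ep<k$ (which is the only meaningful regime) and define the two auxiliary functions
\begin{equation*}
h_{-}(x)=(k-\ep)|x|^\alpha,\qquad h_{+}(x)=(k+\ep)|x|^\alpha,
\end{equation*}
which are radial, positively $\alpha$-homogeneous and strictly positive outside the origin, hence they both satisfy (\ref{assum_g}) with the same exponent $\alpha>1$.

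Next, since $h_{-}\le g\le h_{+}$ pointwise, Theorem \ref{teomonot} yields the inclusions
\begin{equation*}
\Omega_{+}\;\subseteq\;\Omega\;\subseteq\;\Omega_{-},
\end{equation*}
where $\Omega_{\pm}$ denotes the (unique, by Theorem \ref{propunique}) solution of (\ref{ovfbp1}) with $g$ replaced by $h_{\pm}$. By Theorem \ref{theoradial}, each $\Omega_{\pm}$ is a ball centered at $O$, and the explicit computation in the radial case (see the proof of Proposition \ref{corollstab}, using $u_{B_\rho}(x)=(\rho^2-|x|^2)/2N$ and the overdetermined condition $\rho/N=c\rho^\alpha$ at radius $\rho$ for the coefficient $c$) gives
\begin{equation*}
\Omega_{+}=B(O,r'_\ep),\qquad \Omega_{-}=B(O,R'_\ep),
\end{equation*}
with
\begin{equation*}
r'_\ep=\bigl(N(k+\ep)\bigr)^{-\frac{1}{\alpha-1}},\qquad R'_\ep=\bigl(N(k-\ep)\bigr)^{-\frac{1}{\alpha-1}}.
\end{equation*}
Clearly $R'_\ep>r'_\ep$ and $B(O,r'_\ep)\subseteq\Omega\subseteq B(O,R'_\ep)$.

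Finally, to obtain the quantitative estimate, I would expand the two radii around $\ep=0$. Setting $F(s)=(N(k-s))^{-1/(\alpha-1)}$ and $G(s)=(N(k+s))^{-1/(\alpha-1)}$, both are smooth near $s=0$ with $F(0)=G(0)=(Nk)^{-1/(\alpha-1)}$ and
\begin{equation*}
F'(0)=\frac{1}{\alpha-1}(Nk)^{-\frac{\alpha}{\alpha-1}}N,\qquad G'(0)=-\frac{1}{\alpha-1}(Nk)^{-\frac{\alpha}{\alpha-1}}N,
\end{equation*}
so that
\begin{equation*}
R'_\ep-r'_\ep=F(\ep)-G(\ep)=\frac{2N}{\alpha-1}(Nk)^{-\frac{\alpha}{\alpha-1}}\,\ep+o(\ep)=\mathscr{O}(\ep),
\end{equation*}
as $\ep\to 0^+$, which concludes the proof. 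There is no serious obstacle here: the argument is essentially an immediate application of the monotonicity (Theorem \ref{teomonot}) and the radial symmetry (Theorem \ref{theoradial}) already established, together with the elementary explicit solution of the overdetermined problem in the radial homogeneous case.
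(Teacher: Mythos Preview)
Your proof is correct and follows essentially the same approach as the paper: bracket $g$ between the radial $\alpha$-homogeneous functions $(k\pm\ep)|x|^\alpha$, apply the monotonicity of Theorem \ref{teomonot} and the radial symmetry of Theorem \ref{theoradial} to trap $\Omega$ between the two explicit balls, and compare the radii. In fact your argument is slightly more complete than the paper's, which only observes that $r'_\ep$ and $R'_\ep$ share the common limit $(Nk)^{-1/(\alpha-1)}$ as $\ep\to 0^+$; your explicit first-order expansion actually delivers the claimed $\mathscr{O}(\ep)$ rate.
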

\begin{proof}
As before we notice that the assumptions on $h$ implies that there exists a positive constant $k$ such that $h(x)=k|x|^{\alpha}$.

Since the stress function of a ball $B(O,\rho)$ is of the form  $u(x)=\frac 1{2N}(\rho-|x|^2)$, the solutions to (\ref{ovfbp1}) related to $h+\ep|x|^{\alpha},h-\ep|x|^{\alpha}$ are
$$
u_+(x)=\frac{{r'_{\ep}}^2-|x|^2}{2N}, \qquad u_-(x)=\frac{{R'_{\ep}}^2-|x|^2}{2N},
$$
with $r'_{\ep}={\big(N(k+\ep)\big)^{-\frac 1{\alpha-1}}}$, $R'_{\ep}={\big(N(k-\ep)\big)^{-\frac 1{\alpha-1}}}$, respectively.

Then, for every $\alpha>1$, it holds $\lim_{\ep\to 0^+} r'_{\ep}=\lim_{\ep\to 0^+} R'_{\ep}=(kN)^{-\frac 1{\alpha-1}}$, which entails the thesis.
Notice that the value $r=(kN)^{-\frac 1{\alpha-1}}$, corresponds to the value of the radius of the ball solution to Problem (\ref{ovfbp1}) related to the function $h(x)$.
\end{proof}

\begin{remark}
Notice that, by the homogeneity of $g$ and $h$, the condition $|g(x)-h(x)| \le \ep$ in Proposition \ref{corollstab} guarantees the validity of the condition $|g(x)-h(x)| \le\ep|x|^{\alpha}$ in Proposition \ref{propstab}.
Indeed 
$$
h(x)+\ep|x|^{\alpha}=|x|^{\alpha}(h(\frac x{|x|}) +\ep)\ge |x|^{\alpha}(g(\frac x{|x|}))=g(x),
$$
and, analogously, $h(x)-\ep\le g(x)$.

However Proposition \ref{corollstab} is stronger than Proposition \ref{propstab}, depending on the value of $k>0$.
In particular if $k< 1-\ep$, then the balls $B(O,r_\ep), B(O,R_\ep)$ give a better approximation to the set $\Omega$, while for $k>1+\ep$, it is more convenient to compare $\Omega$ with $B(O,r'_\ep), B(O,R'_\ep)$.
For $1-\ep<k<1+\ep$ we have $R_\ep<R'_\ep$ and $r_\ep>r'_\ep$.
\end{remark}

\begin{remark}
Since $g$ is homogeneous, it is completely described by its degree of homogeneity $\alpha$ and one of its level set, say
$$
G_1=\{x\in\rn\,:\,g(x)< 1\}\,.
$$
Indeed the sublevel set $G_t$ can be written in terms of $G_1$, see (\ref{GtG1}).

Hence the distance of $g$ to be radial can be conveniently expressed in terms of the distance of $G_1$ from a ball.

Propositions \ref{corollstab} and \ref{propstab} can in fact be rewritten in terms of sublevel sets.
More precisely the condition $|g-h|\le \ep$ becomes
$$
B\Big(O,\big(\frac{1-\ep}k\big)^{\frac 1{\alpha}}\Big)\subset G_1\subset B\Big(O,\big(\frac{1+\ep}k\big)^{\frac 1{\alpha}}\Big),
$$
where $k=h|_{S^{N-1}}$, while $|g-h|\le \ep|x|^{\alpha}$ entails
$$
B\Big(O,\frac 1{(k+\ep)^{\frac 1{\alpha}}}\Big)\subset G_1\subset B\Big(O,\frac 1{({k-\ep})^{\frac 1{\alpha}}}\Big).
$$
\end{remark}

We now present another symmetry result concerning the Steiner symmetric case.
More precisely, let us assume $g$ to be \emph{Steiner symmetric} in the following sense:
\begin{equation}\label{steiner1}
\begin{cases}
\text{$g$ is symmetric with respect to $\{x_N=0\}$ and} \\
\text{the sublevel sets of $g$ are convex in the $x_N$ direction.}   
\end{cases}
\end{equation}
Notice that this can be rephrased as 
$$
g(x',x_N)\ge g(x',y_N), \qquad\text{whenever }|x_N|\ge|y_N|.
$$

\begin{theorem}\label{theosteiner}
Consider a function $g$ satisfying assumptions (\ref{assum_g}) and assume $g$ to be Steiner symmetric in the sense of (\ref{steiner1}).
Then the solution $\Om$ to Problem (\ref{min1}) is symmetric with respect to the hyperplane $\{x_N=0\}$.
\end{theorem}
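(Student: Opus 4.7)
The plan is to argue by Steiner symmetrization with respect to the hyperplane $H=\{x_N=0\}$. For $A\subset\rn$ of finite measure, let $A^\#$ denote its Steiner symmetrization: for each $x'\in\R^{N-1}$, the slice $A^\#\cap L_{x'}$, with $L_{x'}=\{(x',x_N):x_N\in\R\}$, is the symmetric interval centered at $x_N=0$ with the same one-dimensional measure as $A\cap L_{x'}$. For $v\in H^1_0(A)$ with $v\ge 0$, the Steiner rearrangement $v^\#$, defined by $\{v^\#>t\}=\{v>t\}^\#$, lies in $H^1_0(A^\#)$.

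First I would check that $\Om^\#$ is admissible. Condition (\ref{steiner1}) says that for each $x'$ the map $x_N\mapsto g^2(x',x_N)$ is symmetric in $x_N$ and nondecreasing in $|x_N|$, so $\Om^\#\cap L_{x'}$ is a sublevel set of this one-dimensional function with the same measure as $\Om\cap L_{x'}$. By the bathtub principle applied slicewise and Fubini,
$$
\phi(\Om^\#)\le\phi(\Om)\le 1.
$$

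Next, setting $u=u_\Om$ and letting $u^\#$ be its Steiner rearrangement, the Polya--Szeg\H o inequality for Steiner symmetrization together with equimeasurability give $\int|\nabla u^\#|^2\le\int|\nabla u|^2$ and $\int u^\#=\int u$. Using the variational characterization (\ref{i3}) and the identity $J(\Om)=G_\Om(u_\Om)$,
$$
J(\Om^\#)=G_{\Om^\#}(u_{\Om^\#})\le G_{\Om^\#}(u^\#)\le G_\Om(u)=J(\Om).
$$
By minimality of $\Om$, equalities hold throughout. Strict convexity of $G_{\Om^\#}$ forces $u^\#=u_{\Om^\#}$, and equality in the Steiner Polya--Szeg\H o inequality, combined with the nondegeneracy of the critical set of $u^\#=u_{\Om^\#}$ (which is guaranteed by $-\Delta u^\#=1$ and so $u^\#$ cannot be locally constant on any set of positive measure), implies via a Brothers--Ziemer type result for Steiner rearrangement that $u(x',x_N)=u_{\Om^\#}(x',x_N-\tau(x'))$ for some slicewise translation $\tau(x')$.

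It remains to show $\tau\equiv 0$, from which $\Om=\Om^\#$ follows. Substituting the above expression into $-\Delta u=1$ and using $-\Delta u_{\Om^\#}=1$ produces an identity which, together with the strict monotonicity of $u_{\Om^\#}$ in $x_N$ on each slice and the boundary regularity of Theorem \ref{mainthm}, rigidifies $\tau$ to a constant. If $\tau\ne 0$, then the translated set $\Om-\tau e_N=\Om^\#$ satisfies $J(\Om^\#)=J(\Om)$ by translation invariance, while a further slicewise application of the bathtub principle yields $\phi(\Om^\#)<\phi(\Om)\le 1$ strictly (here one uses that $g$ is $\alpha$-homogeneous, H\"older continuous, and strictly positive off the origin, which prevents $g^2$ from being constant along any nontrivial slice through a point of the optimum). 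Then rescaling $\Om^\#$ by the unique $t>1$ with $\phi(t\Om^\#)=1$ gives $J(t\Om^\#)=t^{N+2}J(\Om^\#)<J(\Om)$, contradicting minimality. The principal obstacle is precisely this last step: the equality case of the Steiner Polya--Szeg\H o inequality only provides symmetry up to a slicewise translation, and ruling out a nonzero $\tau$ requires both the analytic input of the equation $-\Delta u=1$ and a strict bathtub estimate for $\phi$ after translation, both being delicate under the mere assumption (\ref{steiner1}) on $g$.
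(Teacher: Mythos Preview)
Your argument up through the inequality
\[
J(\Om^\#)=G_{\Om^\#}(u_{\Om^\#})\le G_{\Om^\#}(u^\#)\le G_\Om(u_\Om)=J(\Om)
\]
is exactly the paper's proof: slicewise bathtub for $\phi(\Om^\#)\le\phi(\Om)$, then P\'olya--Szeg\H{o} plus equimeasurability for $J$. The paper \emph{stops at this point}. It simply concludes that $\Om^\star$ is an admissible competitor with $J(\Om^\star)\le J(\Om)$, hence also a minimizer; it does not carry out any equality-case analysis. In other words, what is actually proved in the paper is ``a Steiner-symmetric minimizer exists'', and the phrase ``the solution $\Om$ is symmetric'' in the statement should be read in that weaker sense (uniqueness is only available for $\alpha>1$ anyway, via Theorem~\ref{propunique}).

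Everything you write after ``By minimality of $\Om$, equalities hold throughout'' is therefore extra, and this is precisely where the real difficulties lie, as you yourself flag. Two points deserve emphasis. First, the equality case in the Steiner P\'olya--Szeg\H{o} inequality only yields a \emph{measurable} slicewise shift $\tau(x')$ (Cianchi--Fusco type results), and your claim that plugging $u(x',x_N)=u_{\Om^\#}(x',x_N-\tau(x'))$ into $-\Delta u=1$ ``rigidifies $\tau$ to a constant'' is not justified: the resulting identity involves $\nabla'\tau$ and $\Delta'\tau$ coupled with second derivatives of $u_{\Om^\#}$, and showing it forces $\tau$ constant is nontrivial and not standard. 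Second, even granting $\tau$ constant, the strict inequality $\phi(\Om^\#)<\phi(\Om)$ for $\tau\ne 0$ requires that $x_N\mapsto g^2(x',x_N)$ be \emph{strictly} increasing in $|x_N|$ on the relevant interval for a set of $x'$ of positive measure; condition~(\ref{steiner1}) gives only monotonicity, and homogeneity plus positivity off the origin does not automatically exclude flat pieces on every slice touching $\partial\Om$. So your honest caveat at the end is well placed: these steps are genuinely delicate, and the paper avoids them by settling for the weaker conclusion.
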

\begin{proof}
The proof makes use of Steiner symmetrization. 
Let us recall some notations; for more
details, we refer to \cite{PoSz}.

We denote by $\Om'$ the projection of $\Om$ onto $\R^{N-1}$:
$$
\Omega'=\{x'\in\R^{N-1}\ \text{such that there exists}\ x_N\ \text{with }(x',x_N)\in\Omega\}.
$$ 
For $x'\in \R^{N-1}$, we denote by $\Om(x')$ the intersection of $\Om$ with the line $\{x'\}\times\R$; that is
$$
\Omega(x'):=\{x_N\in\R\ \text{such that }(x',x_N)\in\Omega\}.
$$ 
Obviously $\Om(x')$ is the empty set for every $x'\in\R^{N-1}\setminus \Om'$ while 
$$
\bigcup_{x'\in\Om'} \left( x'\times\Om(x')\right) =\Om.
$$
We introduce the one-dimensional set 
$$
\Omega^\star(x'):=\left(-\frac{1}{2}|\Omega(x')|,\frac{1}{2}|\Omega(x')|
\right);
$$ 
which is a symmetric interval with the same measure as $\Om(x')$.

The \emph{Steiner symmetrized} of $\Om$ with respect to the hyperplane
$\{x_N=0\}$ is the set $\Om^\star$ defined by
$$
\Omega^{\star}:=\left\{ x=(x',x_N)\ \text{such that } -\frac{1}{2}|\Omega(x')| < x_N < \frac{1}{2}|\Omega(x')|,\;x'\in\Omega'\right\}.
$$ 
Now, since $g$ is increasing in the $x_N$ direction from $x_N=0$ and symmetric, we have
for any $x'\in\Om'$
\begin{eqnarray*}
    \int_{\Om(x')} g^2(x',x_N)dx_N & = & \int_{\Om(x')\cap\Omega^\star(x')} g^2(x',x_N) dx_N + \int_{\Om(x')\setminus\Omega^\star(x')} g^2(x',x_N) dx_N \\
    &\ge& \int_{\Om^\star(x')\cap\Omega(x')} g^2(x',x_N) dx_N + \int_{\Om^\star(x')\setminus\Omega(x')} g^2(x',x_N) dx_N \\
&=& \int_{\Om^\star(x')} g^2(x',x_N)dx_N.
  \end{eqnarray*}
Therefore, integrating over $\Om'$, we get
$$
\int_\Om g^2 dx=\int_{\Om'} dx'\int_{\Om(x')}g^2(x',x_N)dx_N \geq \int_{\Om^\star} g^2 dx;
$$
this shows that $\Om^\star$ is also admissible for the minimization problem (\ref{min1}).
Denote by $u^\star$ the Steiner symmetrization of $u_\Om$, that is the function whose level sets are
the Steiner symmetrization of level sets of $u_\Om$.
By Fubini Theorem and classical results on rearrangement, it is well known that
\begin{eqnarray*}
\int_{\Omega} u_\Om \,dx &=& \int_{\Omega^{\star}} u^\star \,dx ,\\ 
\int_{\Omega} |\nabla u_\Om|^2 \,dx &\geq& \int_{\Omega^{\star}} |\nabla u^\star|^2\,dx.
\end{eqnarray*}
Thus, since $u^\star$ belongs to the Sobolev space $H^1_0(\Om^\star)$, using the
variational characterization (\ref{i3}) we get
$$
J(\Om^\star)=G_{\Om^\star}(u_{\Om^\star})\leq G_{\Om^\star}(u^\star) \leq G_\Om(u_\Om)=J(\Om),
$$
and the proof is complete.
\end{proof}

\section{A relation with the level sets of $g$}\label{finalSec}
\medskip

As largely proved in the previous sections, clearly the geometry of $g$ influences the geometry of the solution $\Omega$. Following the radial case, one could expect the shape of $\Omega$ to be strongly related 
to the shape of the level sets of $g$. Indeed in the radial case,  $\Omega$ is an homothetic copy of the level sets of $g$; unfortunately this happens only in this particular case, as can be easily inferred from the Serrin result \cite{Serrin}. On the other hand, some estimate of the solution $\Omega$ in terms of $g$ is always possible and this is precisely the aim of this final section. Roughly speaking, we will show that
$\Omega$ must be incapsulated between two a priori known level sets of $g$.
To give a precise statement, it is convenient to introduce first some notations.

As before, assume $g$ to be homogeneous of degree $\alpha>0$ and $g(x)>0$ if $x\neq 0$.
Let $t\in(0,\infty)$ and denote by $G_t$ the (open) $t$-sublevel set of $g$, that is
$$
G_t=\{x\in\R^n\,:\,g(x)< t\}\,.
$$
By homogeneity, it is easily seen that all the level sets are dilatation of $G_1$, precisely
\begin{equation}\label{GtG1}
G_t=t^{\frac1{\alpha}}\,G_1\,.
\end{equation}
Now let $u_t$ be the stress function of $G_t$ and assume that $g$ is regular enough to get
$u_t\in C^2(G_t)\cap C^1(\overline G_t)$ (for instance $g\in C^{1,\beta}(\rn)$ for some $\beta>0$). 

In particular $u_1$ is the solution of
\begin{equation}\label{u1}
\begin{cases}
-\Delta u_1=1 & \qquad\text{in }\;G_1\\
u_1=0 & \qquad\text{on }\;\p G_1,
\end{cases}
\end{equation}
and it holds
\begin{equation}\label{utu1}
u_t(x)=t^{2/\alpha}\,u_1\left(\frac{x}{t^{1/\alpha}}\right)\,.
\end{equation}
Set
\begin{equation}\label{AB}
A=\min_{\partial G_1}|\nabla u_1|\,,\quad B=\max_{\partial G_1}|\nabla u_1|\,.
\end{equation}
Notice that $A$ and $B$ depends only on $g$ and they are, in principle, a priori known.
Moreover, $A\leq B$ and $A<B$ unless $G_1$ is a ball (see \cite{Serrin}), that is $g$ is radial; the radial case is 
discussed in detail in the previous section.

\begin{theorem}\label{thmboundlevelsets}
If $\alpha >1$ then
$$
A^{1/(\alpha-1)}G_1\subseteq\Omega\subseteq B^{1/(\alpha-1)}G_1\,.
$$
\end{theorem}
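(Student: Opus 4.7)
The plan is to establish both inclusions by a monotone dilation argument in the spirit of the proofs of Theorem \ref{teomonot} and Theorem \ref{propstarshape}, comparing the stress function $u_\Omega$ with the stress function of a suitable dilate of $G_1$ and applying the Hopf boundary point lemma at a touching point.

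\medskip

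\textbf{Outer inclusion} $\Omega\subseteq B^{1/(\alpha-1)}G_1$. Set
$$
s^{*}=\inf\{s>0\ :\ \Omega\subseteq sG_1\}.
$$
Since $\Omega$ is bounded (Theorem \ref{mainthm}) and $0\in G_1$ with $G_1$ open, we have $s^{*}\in(0,\infty)$, and $\overline\Omega\subseteq \overline{s^{*}G_1}$ with a touching point $\bar x\in\partial\Omega\cap\partial(s^{*}G_1)$ where both sets share the same outer unit normal $\nu$. Assume, by contradiction, $s^{*}>B^{1/(\alpha-1)}$. The stress function of $s^{*}G_1$ is $v(x)=(s^{*})^2 u_1(x/s^{*})$ (the analogue of (\ref{utu1}) for the dilation $s^{*}G_1=G_{(s^{*})^\alpha}$), and $v>0$ inside, $v=0$ on $\partial(s^{*}G_1)$. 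Consider $w=v-u_\Omega$ in $\Omega$: then $-\Delta w=0$, $w\ge 0$ on $\partial\Omega$, and $w(\bar x)=0$. Hopf's lemma gives $\partial w/\partial\nu(\bar x)<0$, i.e.\ $|\nabla v(\bar x)|>|\nabla u_\Omega(\bar x)|=g(\bar x)$. Using $\bar x/s^{*}\in\partial G_1$ and the homogeneity of $g$,
$$
|\nabla v(\bar x)|=s^{*}|\nabla u_1(\bar x/s^{*})|\le s^{*}B,\qquad g(\bar x)=(s^{*})^{\alpha}g(\bar x/s^{*})=(s^{*})^{\alpha}.
$$
Hence $s^{*}B>(s^{*})^{\alpha}$, that is $s^{*}<B^{1/(\alpha-1)}$, a contradiction.

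\medskip

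\textbf{Inner inclusion} $A^{1/(\alpha-1)}G_1\subseteq\Omega$. Set
$$
s_{*}=\sup\{s>0\ :\ sG_1\subseteq\Omega\}.
$$
Proposition \ref{propO} gives $0\in\Omega$, and since $G_1$ is bounded this yields $s_{*}>0$; moreover $\overline{s_{*}G_1}\subseteq\overline\Omega$ with a touching point $\bar x\in\partial(s_{*}G_1)\cap\partial\Omega$. Assume $s_{*}<A^{1/(\alpha-1)}$. Letting again $v(x)=s_{*}^{2}u_1(x/s_{*})$ be the stress function of $s_{*}G_1$, put $w=u_\Omega-v$ in $s_{*}G_1$: then $-\Delta w=0$, $w\ge 0$ on $\partial(s_{*}G_1)$, and $w(\bar x)=0$, so Hopf yields $|\nabla v(\bar x)|<|\nabla u_\Omega(\bar x)|=g(\bar x)$. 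Since $\bar x/s_{*}\in\partial G_1$,
$$
|\nabla v(\bar x)|=s_{*}|\nabla u_1(\bar x/s_{*})|\ge s_{*}A,\qquad g(\bar x)=s_{*}^{\alpha},
$$
so $s_{*}A<s_{*}^{\alpha}$, i.e.\ $s_{*}>A^{1/(\alpha-1)}$, a contradiction.

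\medskip

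The main technical obstacle is the applicability of Hopf's lemma at the touching point $\bar x$. For the inner inclusion this is unproblematic because $\partial(s_{*}G_1)$ is as smooth as $g$ allows (the assumption $u_1\in C^{2}(G_1)\cap C^{1}(\overline{G_1})$ made before the statement guarantees an interior sphere condition for $s_{*}G_1$). For the outer inclusion one needs an interior sphere condition for $\Omega$ at $\bar x$; this holds whenever $\bar x$ lies on the reduced boundary, where $\partial\Omega$ is $C^{1,\beta}$ by Theorem \ref{mainthm}. At singular boundary points one argues by approximation, replacing $\Omega$ by slightly smaller smooth subdomains or working with the lim sup of the gradient along interior normal rays. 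Noting that $\bar x\ne 0$ in both cases (because $0$ lies in the interior of the relevant sets, by Proposition \ref{propO} and since $0\in G_1$), the homogeneity identity $g(\bar x)=s^{\alpha}$ is justified and the proof concludes.
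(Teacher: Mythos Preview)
Your proof is correct and follows essentially the same approach as the paper: both compare $u_\Omega$ with the stress function of the extremal dilate of $G_1$ and read off the gradient inequality at a touching point. The only cosmetic differences are that the paper parametrizes by the level value $t$ (working with $G_t=t^{1/\alpha}G_1$) rather than the dilation factor, and argues directly via the non-strict inequality $\partial w/\partial\nu\le 0$ at the minimum point instead of your contradiction set-up with Hopf's strict inequality; your extra paragraph on the regularity needed at the touching point is a point the paper simply passes over.
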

\begin{proof}
Since the origin $O$ is in the interior of  both $\Omega$ and $G_1$, and they are both bounded, there exist $r$ and 
$s$ such that $0<r\leq s$ and
$$
r=\sup\{t\,:\, G_t\subseteq\Omega\}\quad\text{and}\quad
s=\inf\{t\,:\, \Omega\subseteq G_t\}\,.
$$
Then $G_r\subseteq\Omega\subseteq G_s$ and there exist
$$
x_r\in\partial G_r\cap\partial\Omega\quad\text{and}\quad x_s\in\partial\Omega\cap\partial G_s\,.
$$
\newcommand{\Gr}{
(-6,0) to[out=90,in=200] (0,5) to[out=20,in=180] (5,6) to[out=0,in=90] (8,0) to[out=270,in=0] (0,-5) to[out=180,in=315] (-4,-4) to[out=135,in=270] (-6,0)
}
\newcommand{\disegnoomega}{
(-3.5,0) to[out=90,in=200] (0,5) to[out=10,in=180] (2,5.2) to[out=0,in=90] (4,0) to[out=270,in=0] (0,-3) to[out=180,in=270] (-3.5,0)
}
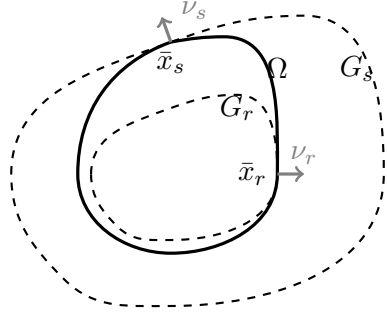
\begin{figure}[h]
\centering
\begin{tikzpicture}[x=3.5mm,y=3.5mm]
\draw[thick, dashed] \Gr;
\draw (7,4) node{$G_s$};
\begin{scope}[scale=0.5]
\draw[thick, dashed] \Gr;
\end{scope}
\draw (2.5,2.5) node{$G_r$};
\draw[very thick] \disegnoomega;
\draw (4,4) node{$\Omega$};
\draw[->, gray, very thick] (90:5)-- ++(110:1);
\draw[gray] (0,5.5) node[above right]{$\nu_s$};
\draw (0,5) node[below]{$\xx_s$};
\draw[->, gray, very thick] (0:4)-- ++(0:1);
\draw[gray] (5,0) node[above]{$\nu_r$};
\draw (4,0) node[left]{$\xx_r$};
\end{tikzpicture}
\caption{$G_r\subseteq\Omega\subseteq G_s$}\label{Fig-GrGs}
\end{figure}
We want to estimate $r$ and $s$ in terms of $g$.
Then let $w_r=u_\Omega-u_r$, where $u_\Omega$ is as usual the stress function of $\Om$, and notice that $w_r$ satisfies
$$
\begin{cases}
-\Delta w_r=0 &\qquad\text{in }\;G_r\\
w_r\geq0 & \qquad\text{on }\;\p G_r\\
w_r(x_r)=0. &
\end{cases}
$$
By maximum principle $w_r\geq 0$ in $G_r$ and then $x_r$ is a minimum point. 
Whence
$$
\frac{\partial w_r}{\partial\nu_r}(x_r)=\frac{\partial u_\Omega}{\partial\nu_r}(x_r)-\frac{\partial u_r}{\partial\nu_r}(x_r)=
|\nabla u_r(x_r)|-|\nabla u_\Omega(x_r)|\leq 0\,,
$$
where $\nu_r$ is the outer normal of $G_r$ (and $\Omega$) at $x_r$.
Since $\Omega$ solves Problem (\ref{i1}) and $x_r\in\partial G_r$, the latter reads
\begin{equation}\label{nablaur}
|\nabla u_r(x_r)|\leq g(x_r)=r\,.
\end{equation}
On the other hand, \eqref{utu1} gives
$$
|\nabla u_r(x_r)|=r^{1/\alpha}\left|\nabla u_1\left(\frac{x_r}{r^{1/\alpha}}\right)\right|
$$
and thanks to \eqref{GtG1} it holds
$$
\frac{x_r}{r^{1/\alpha}}\in\partial G_1\,.
$$
Then from \eqref{AB} we get
$$
r\geq r^{1/\alpha}A,
$$
which implies, for $\alpha>1$,
\begin{equation}\label{rA}
r\geq A^{\alpha/(\alpha-1)};
\end{equation}
and this proves the first inclusion of the statement, using \eqref{GtG1}.
\medskip

To obtain the second inclusion we argue in a similar way. Let $w_s=u_s-u_\Omega$ and notice that it solves
$$
\begin{cases}
-\Delta w_s=0 & \qquad\text{ in }\;\Omega\\
w_s\geq0 & \qquad\text{ on }\;\p \Omega\\
w_s(x_s)=0
\end{cases}
$$
Arguing as before we get
\begin{equation}\label{nablaus}
|\nabla u_s(x_s)|\geq g(x_s)=s\,.
\end{equation}
Coupling the latter with \eqref{utu1} and taking again into account \eqref{GtG1} and \eqref{AB}, we get
$$
s\leq s^{1/\alpha}B,
$$
whence, if $\alpha>1$, we obtain
\begin{equation}\label{rA2}
s\leq B^{\alpha/(\alpha-1)},
\end{equation}
which proves the second inclusion of the statement, using again \eqref{GtG1}.
\end{proof}
\section*{Acknowledgements}
The authors wish to warmly thank Dorin Bucur for several helpful discussions.

\smallskip

This paper started while Chiara Bianchini was supported by the project ANR-09-BLAN-0037 {\it Geometric analysis of optimal shapes (GAOS)} and by the research group INRIA {\it Contr\^ole robuste infini-dimensionnel et applications (CORIDA)}.

The work of Antoine Henrot is part of the projects ANR-09-BLAN-0037 {\it Geometric analysis of optimal shapes (GAOS)} and ANR-12-BS01-0007-01-OPTIFORM {\it Optimisation de formes} financed by the French Agence Nationale de la Recherche (ANR).

The work of Chiara Bianchini and Paolo Salani is part of the projects
GNAMPA 2012 {\it Problemi sovradeterminati e geometria delle soluzioni per
equazioni ellittiche e paraboliche} and PRIN 2008 {\it Aspetti geometrici
delle equazioni alle derivate parziali e questioni connesse}.

\end{document}